\newtheorem{thm}{Theorem}[section]
\newtheorem{cor}[thm]{Corollary}
\newtheorem{lem}[thm]{Lemma}
\newtheorem{prop}[thm]{Proposition}
\theoremstyle{definition}
\newtheorem{defn}[thm]{Definition}
\newtheorem{rem}[thm]{Remark}
\newcommand{\bdy}{\partial}                                                 
\newcommand{\ep}{\varepsilon}
\newcommand{\free}[1]{\ensuremath{\mathcal{F}({#1})}}                                              
\newcommand{\ind}[1]{\ensuremath{\mbox{\boldmath{$1$}}_{#1}}}                               
\newcommand{\lint}[4]{\ensuremath{\int_{#1}^{#2}{#3}\:\mathrm{d}{#4}}}  
\newcommand{\Lipz}[1]{\ensuremath{\Lip_0({#1})}}
\newcommand{\lspan}[1]{\ensuremath{\aspan({#1})}}                                               
\newcommand{\map}[3]{\ensuremath{{#1}:{#2}\longrightarrow{#3}}}             
\newcommand{\N}{\mathbb{N}}
\newcommand{\Z}{\mathbb{Z}}                                                  
\newcommand{\n}[1]{\ensuremath{\left\|{#1}\right\|}}                        
\newcommand{\ndot}{\ensuremath{\left\|\cdot\right\|}}                                       
\newcommand{\pn}[2]{\ensuremath{\left\|{#1}\right\|_{#2}}}                          
\newcommand{\pndot}[1]{\ensuremath{\left\|\cdot\right\|_{#1}}}                  
\newcommand{\R}{\mathbb{R}}                                                 
\newcommand{\res}[1]{\ensuremath{\!\!\upharpoonright_{#1}}}                 
\newcommand{\set}[2]{\ensuremath{\left\{{#1}\;:\;\,{#2}\right\}}}         
\newcommand{\tri}{{\displaystyle |\kern-.9pt|\kern-.9pt|}}
\newcommand{\tn}[1]{\ensuremath{\tri{#1}\tri}}                                                  
\newcommand{\tndot}{\ensuremath{\tri\cdot\tri}}                                                 
\newcommand{\ts}{\textstyle}
\DeclareMathOperator{\aspan}{span}                                        
\DeclareMathOperator{\diam}{diam}                                         
\DeclareMathOperator{\Lip}{Lip}                                                                                     
\DeclareMathOperator{\intr}{int}                                          
\DeclareMathOperator{\supp}{supp}                                         
\DeclareMathOperator{\conv}{conv}                                          
\begin{document}
\title[The MAP and Lipschitz-free spaces over subsets of $\R^N$]{The metric approximation property and Lipschitz-free spaces over subsets of $\R^N$}
\begin{abstract}
We prove that for certain subsets $M \subseteq \R^N$, $N \geqslant 1$, the Lipschitz-free space
$\free{M}$ has the metric approximation property (MAP), with respect to any norm on $\R^N$. In particular,
$\free{M}$ has the MAP whenever $M$ is a finite-dimensional compact convex set. This should be compared
with a recent result of Godefroy and Ozawa, who showed that there exists a compact
convex subset $M$ of a separable Banach space, for which $\free{M}$ fails the approximation property.
\end{abstract}

\author[E. Perneck\'a]{Eva Perneck\'a}
\address{Department of Mathematical Analysis, Faculty of Mathematics and Physics, Charles University, Sokolovsk\'a 83, 186 75 Praha 8, Czech Republic}
\address{Institute of mathematics of the Academy of Sciences of the Czech Republic, \v Zitn\'a 25, 115 67 Praha 1, Czech Republic}
\address{Universit\'e de Franche-Comt\'e, Laboratoire de Math\'ematiques UMR 6623, 16 route de Gray, 25030 Besan\c con Cedex, France}
\email{pernecka@karlin.mff.cuni.cz}

\author[R. J. Smith]{Richard J. Smith}
\address{School of Mathematical Sciences, University College Dublin, Belfield, Dublin 4, Ireland}
\email{richard.smith@maths.ucd.ie}

\thanks{The authors would like to thank G.\ Godefroy and M.\ Ghergu for making some helpful comments
during the preparation of this work.}
\thanks{The first author was supported in part by GA\v CR P201/11/0345, RVO: 67985840 and SVV-2014-260106.}

\subjclass[2010]{Primary 46B20, 46B28}
\date{\today}
\maketitle

\section{Introduction and main results}\label{s:intro}

Given a metric space $(M,d)$, a function $\map{f}{M}{\R}$ is {\em Lipschitz} if
\[
\Lip(f) := \sup\set{\frac{|f(x)-f(y)|}{d(x,y)}}{x \neq y},
\]
is finite. Evidently, the {\em Lipschitz constant} $\Lip(f)$ of $f$ depends on $d$. A
{\em pointed} metric space is simply a metric space with a distinguished point $x_0$. Given
such a metric space $(M,d)$, we denote by $\Lipz{M}$ the set of all Lipschitz functions as
above which also vanish at $x_0$. This set becomes a Banach space when endowed with the norm
defined by $\n{f}=\Lip(f)$. Let $\delta(x)$, $x \in M$, denote the evaluation functional on
$\Lipz{M}$ given by $\langle f,\delta(x)\rangle=f(x)$ for $f\in\Lipz{M}$. The {\em Lipschitz-free
space} (or simply {\em free space}) $\mathcal{F}(M)$ is defined to be the norm-closed linear
span of $\set{\delta(x)}{x \in M} \subseteq \Lipz{M}^*$. The Dirac map $\delta$ is an isometric
embedding of $M$ into $\free{M}$. If $M$ is a Banach space, then $\delta$ is non-linear with
a linear left inverse given by a barycentre map (see \cite[Proposition 2.1 and Lemma 2.4]{gk:03}).
It turns out that the dual space $\mathcal{F}(M)^*$ of $\free{M}$ is linearly isometric to
$\Lipz{M}$. Moreover, on bounded subsets of $\Lipz{M}$, the weak$^*$-topology induced by $\free{M}$
agrees with the topology of pointwise convergence. The precise location of the distinguished
point $x_0$ within $M$ is not particularly important for us:\ given two distinguished points
$x_0$ and $x_1$, the map $f\mapsto f-f(x_1)\ind{M}$ is a dual linear isometry between the
corresponding spaces of Lipschitz functions. Spaces of Lipschitz functions and their preduals
(referred to as Arens-Eells spaces) are studied in the book \cite{w:99} by Weaver. An introduction
to the theory of Lipschitz-free spaces as defined here can be found in the seminal work of
Godefroy and Kalton \cite{gk:03}.

Lipschitz-free spaces are related to the study of Lipschitz isomorphism classes of spaces. Indeed,
they constitute a tool for abstract linearization of Lipschitz maps
in the following sense. If we use the above Dirac map $\delta$ to identify metric spaces $M$
and $N$ with subsets of the corresponding Lipschitz-free spaces $\free{M}$ and $\free{N}$,
respectively, then any Lipschitz map $L$ from the metric space $M$ into the metric
space $N$ has an extension to a continuous linear map $\hat L$ from $\free{M}$ into $\free{N}$ which preserves the
Lipschitz constant (see \cite{w:99} or \cite[Lemma 2.2]{gk:03}). Moreover, if $N$ is a Banach space, then by composing $\hat L$ with the barycentre map we obtain an extension of $L$ to a continuous linear map from $\free{M}$ into $N$. So Lipschitz-free spaces
can be employed to transfer non-linear problems to a linear setting. Moreover, in \cite[Theorem 3.1]{gk:03},
Godefroy and Kalton establish the so-called isometric lifting property for separable Banach spaces. As stated
in \cite[Corollary 3.3]{gk:03}, this implies that if a separable Banach space $X$ is isometric to a subset
of a Banach space $Y$, then $X$ is already linearly isometric to a subspace of $Y$. This assertion fails
in the non-separable case:\ if $X$ is non-separable and weakly compactly generated, then $X$ does not
embed linearly into $\free{X}$ (see \cite[Section 4]{gk:03}).

Despite their straightforward definition, the linear structure of Lipschitz-free spaces is
relatively difficult to analyse and has not been thoroughly described yet. Elucidating the
properties of the class of Lipschitz-free spaces has been the topic of recent research and
several interesting results have been obtained. The linear isometry between $\Lipz{\R}$ and
$L_{\infty}$, furnished by differentiability almost everywhere, yields a predual linear isometry
between $\free{\R}$ and $L_1$. On the other hand, $\free{\R^2}$ is not linearly isomorphic
to any subspace of $L_1$, as Naor and Schechtman showed in \cite{ns:07} by the discretization
of an argument due to Kislyakov \cite{ki:75}. Finally, the metric spaces whose Lipschitz-free
space is linearly isometric to a subspace of $L_1$ were characterized by Godard in \cite[Theorem 4.2]{g:10}
as metric spaces isometrically embeddable into an $\R$-tree. In \cite[Theorem 3.4]{d:14-proper},
Dalet proved that the Lipschitz-free space over a proper ultrametric space is linearly isometric
to the dual of a space which is linearly isomorphic to $c_0$. Recently, C\'uth and Doucha
managed to relax the assumption and show that the Lipschitz-free space over a separable
ultrametric space is linearly isomorphic to $\ell_1$ \cite[Theorem 2]{cd:14}. One of the main
results of the recent work of Kaufmann states that the Lipschitz-free space $\free{X}$ over a
Banach space $X$ is linearly isomorphic to $\left(\sum_{n=1}^\infty\free{X}\right)_{\ell_1}$
\cite[Theorem 3.1]{kauf:14}. This yields an analogue of Pe\l czy\'nski's decomposition method
\cite[Corollary 3.2]{kauf:14} and enables us to find a class of separable metric spaces whose
Lipschitz-free spaces are linearly isomorphic to $\free{c_0}$ \cite[Corollary 3.4]{kauf:14}.
This class contains in particular all $C(K)$ spaces where $K$ is an infinite compact
metric space, thus this result provides another way of obtaining examples, first exhibited by Dutrieux
and Ferenczi, of non-Lipschitz isomorphic Banach spaces having linearly isomorphic Lipschitz-free
spaces \cite[Theorem 5]{df:05}.

In this note we concentrate on approximation properties enjoyed by certain Lipschitz-free spaces.
Recall that a Banach space $X$ has the {\em approximation property} (AP), or the {\em $\lambda$-bounded
approximation property} ($\lambda$-BAP), if the identity operator on $X$ lies in the closure of
the set of bounded, or uniformly $\lambda$-bounded, finite-rank operators on $X$, respectively,
where closure is taken with respect to the topology of uniform convergence on norm-compact subsets
of $X$. If $\lambda$ can be taken to be unity then $X$ is said to have the {\em metric approximation
property} (MAP). A Banach space has the {\em bounded approximation property} (BAP) if it has the
$\lambda$-BAP for some $\lambda$. In the case of the BAP the closure above can be taken with respect
to the strong operator topology.

Godefroy and Kalton in \cite[Theorem 5.3]{gk:03} proved that a Banach space $X$ has the $\lambda$-BAP
if and only if $\free{X}$ has the $\lambda$-BAP. In view of the aforementioned linearization of
Lipschitz maps via Lipschitz-free spaces \cite[Lemma 2.2]{gk:03}, it follows that the BAP is stable
under Lipschitz isomorphisms between Banach spaces. By Godefroy and Ozawa \cite[Theorem 4]{go:14},
every separable Banach space $X$ is linearly isometric to a $1$-complemented subspace of $\free{K}$,
where $K\subseteq X$ is closed, convex and generates $X$. Applying this result to a separable Banach
space failing the AP constructed by Enflo \cite[Theorem 1]{e:73} yields a convex norm-compact metric space
$K$ such that $\free{K}$ also fails the AP \cite[Corollary 5]{go:14}. However, if $M$
is a countable proper metric space, or a proper ultrametric space, then $\free{M}$ has the MAP \cite{d:14-proper}.
The Lipschitz-free space over the Urysohn space has the MAP too, according to
Fonf and Wojtaszczyk \cite[Theorem 2.1]{fw:08}. Lately in \cite[Theorem 1]{cd:14}, C\'uth and Doucha
even built monotone Schauder bases in Lipschitz-free spaces over separable ultrametric spaces.

Let us focus now on the spaces $\R^N$ and their subsets. To prove the aforementioned equivalence between
$X$ having the $\lambda$-BAP and $\free{X}$ having the $\lambda$-BAP, Godefroy and Kalton
first show that $\free{\R^N}$ has the MAP with respect to any norm on $\R^N$ \cite[Proposition 5.1]{gk:03}.
In fact, $\free{\R^N}$ has a finite-dimensional Schauder decomposition \cite{bm:12}, which is monotone
when considered with respect to the $\ell_1$-norm \cite[Theorem 3.1]{lp:13}. This result was extended
in \cite[Theorem 3.1]{hp:14}, where Schauder bases of $\free{\ell_1^N}$ and $\free{\ell_1}$ were found.
In \cite[Proposition 2.3]{lp:13} it is shown that there is a universal constant $C$, such that $\free{M}$,
where $M\subseteq\R^N$ is arbitrary, has the $C\sqrt{N}$-BAP with respect to the Euclidean norm on $\R^N$.
One of the applications of Kaufmann's main result asserts that if $M\subseteq \R^N$ has non-empty
interior, then $\free{M}$ is linearly isomorphic to $\free{\R^N}$ \cite[Corollary 3.5]{kauf:14}.
Consequently, when combined with \cite[Theorem 3.1]{hp:14}, such a $\free{M}$ admits a Schauder basis
(having a basis constant which depends on the dimension $N$, as far as the present authors are aware). 

Our aim is to show that for certain subsets $M \subseteq \R^N$, the space $\mathcal{F}(M)$ has the MAP
with respect to any norm on $\R^N$. The following theorem is our main result.

\begin{thm}\label{thm:MAP}
Let $N \geqslant 1$ and consider $\R^N$ equipped with some norm $\ndot$. Let a compact set $M \subseteq \R^N$ have the property that given $\xi>0$, there
exists a set $\hat{M}\subseteq \R^N$ and a Lipschitz map $\map{\Psi}{\hat{M}}{M}$, such that $M \subseteq \intr(\hat{M})$, $\Lip(\Psi)\leqslant 1 + \xi$ and $\n{x-\Psi(x)} \leqslant \xi$ for all $x \in \hat{M}$. Then the Lipschitz-free space $\free{M}$ has the MAP.
\end{thm}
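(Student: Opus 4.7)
The plan is to transfer finite-rank approximants of the identity on $\free{\R^N}$ \cite[Proposition 5.1]{gk:03} back to $\free{M}$, using the map $\Psi$ from the hypothesis as a bridge. Fix a compact set $K\subseteq\free{M}$ and $\ep>0$, and choose a small parameter $\xi>0$ whose value is to be determined. The hypothesis supplies $\hat M$ with $M\subseteq\intr(\hat M)$ and a Lipschitz map $\map{\Psi}{\hat M}{M}$ satisfying $\Lip(\Psi)\leqslant 1+\xi$ and $\n{x-\Psi(x)}\leqslant\xi$ for $x\in\hat M$. Let $\hat{i}:\free{M}\to\free{\hat M}$ denote the isometric embedding induced by inclusion, $\hat{j}:\free{M}\to\free{\R^N}$ the one induced by $M\hookrightarrow\R^N$, and $\hat\Psi:\free{\hat M}\to\free{M}$ the bounded operator induced by $\Psi$, with $\n{\hat\Psi}\leqslant 1+\xi$.

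As a preliminary step, I would verify that the composition $\hat\Psi\circ\hat i$ — namely the operator on $\free{M}$ induced by $\Psi\res{M}:M\to M$ — converges to the identity uniformly on $K$ as $\xi\to 0$. On Dirac masses this is immediate, since $\n{\hat\Psi\hat i\delta(x)-\delta(x)}_{\free{M}}=\n{\Psi(x)-x}\leqslant\xi$. By linearity, density of finite combinations of Diracs in $\free{M}$, and the uniform bound $\n{\hat\Psi\hat i}\leqslant 1+\xi$, pointwise convergence to the identity extends to all of $\free{M}$, and equicontinuity then upgrades pointwise convergence to uniform convergence on the compact set $K$.

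Next, I would apply the MAP of $\free{\R^N}$ to the compact set $\hat{j}(K)$, obtaining a finite-rank operator $S:\free{\R^N}\to\free{\R^N}$ with $\n{S}\leqslant 1$ and $\n{S\psi-\psi}_{\free{\R^N}}<\xi$ for $\psi\in\hat{j}(K)$. A crucial further requirement is that $S(\free{M})\subseteq\free{\hat M}$, using the natural isometric inclusion $\free{\hat M}\hookrightarrow\free{\R^N}$. Since $M$ is compact and $M\subseteq\intr(\hat M)$, I would fix $\rho>0$ with $M+\cl{B(0,\rho)}\subseteq\hat M$. Inspecting the proof of \cite[Proposition 5.1]{gk:03} — essentially a piecewise-linear interpolation on a triangulation of small mesh, or an analogous partition-of-unity scheme — one arranges the mesh to be at most $\rho$, so that $S\delta(x)$ for $x\in M$ is a finite convex combination of Diracs at points within distance $\rho$ of $x$, hence inside $\hat M$; the inclusion $S(\free{M})\subseteq\free{\hat M}$ then follows by linearity and continuity.

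Setting $T:=\hat\Psi\circ S\res{\free{M}}:\free{M}\to\free{M}$ would yield a finite-rank operator with $\n{T}\leqslant 1+\xi$, and for $\phi\in K$,
\[
\n{T\phi-\phi}_{\free{M}}\leqslant(1+\xi)\n{S\phi-\hat{j}\phi}_{\free{\R^N}}+\n{\hat\Psi\hat i\phi-\phi}_{\free{M}}\leqslant(1+\xi)\xi+\sup_{\phi\in K}\n{\hat\Psi\hat i\phi-\phi}_{\free{M}},
\]
both contributions becoming arbitrarily small for $\xi$ small. Passing to $T/(1+\xi)$ then produces a finite-rank operator of norm $\leqslant 1$ with an additional error at most $\xi\sup_K\n{\phi}_{\free{M}}$, giving the required MAP approximation. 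The main obstacle is justifying the small-displacement property of $S$; if this does not follow transparently from the construction in \cite[Proposition 5.1]{gk:03}, one must adapt it so as to ensure unit operator norm with respect to an arbitrary norm $\ndot$ on $\R^N$ together with the restricted image in $\free{\hat M}$.
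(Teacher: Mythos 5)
Your outer architecture is essentially the paper's, read through the predual: the operator induced by $\Psi$ (your $\hat\Psi$) is exactly the predual of the extension operator $Q$ of Lemma \ref{Lip-perturb}, and the paper likewise composes it with a finite-rank, locally supported, almost-norm-one approximation of the identity on the enlarged set. The genuine gap is that you have deferred the entire difficulty of the theorem to the existence of your operator $S$: a finite-rank operator on $\free{\R^N}$ that \emph{simultaneously} has norm at most $1$ with respect to an \emph{arbitrary} norm $\ndot$, approximates the identity on a compactum, and displaces each $\delta(x)$ by at most $\rho$. You assert this follows by ``inspecting'' \cite[Proposition 5.1]{gk:03}, describing the construction as piecewise-linear interpolation on a small mesh or a partition-of-unity scheme. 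But plain coordinatewise-affine (or simplicial) interpolation of a Lipschitz function on a fine mesh does \emph{not} have operator norm close to $1$ for a general norm on $\R^N$: it is norm-one only for $\pndot{1}$ (see the Remark after Lemma \ref{interpolate}), and for the Euclidean norm one only gets a bound of order $\sqrt{N}$ --- this is precisely \cite[Proposition 2.3]{lp:13}, which yields the $C\sqrt{N}$-BAP, not the MAP. So as written, your $S$ would give $\n{T}\leqslant C\sqrt N(1+\xi)$, and normalizing destroys the approximation; you would recover a known BAP statement but not the metric approximation property.

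The missing idea is the smoothing step. Interpolation barely increases the Lipschitz constant only when applied to functions that are \emph{uniformly differentiable} at the scale of the mesh: this is the hypothesis (\ref{unif2}) of Lemma \ref{interpolate}, which yields $\Lip(\Lambda(f,C)\res{C})\leqslant K^2\ep+\Lip(f)$. To place oneself in that situation one must first mollify, i.e.\ precompose the interpolation with the convolution operator $S_r$ of Lemma \ref{l:smoothing}, which is norm-one, finite-displacement, and produces $C^\infty$ functions with the required uniform differentiability estimate (\ref{uniform differentiability}). The composition ``smooth, then interpolate'' is the local, finite-rank, $(1+\ep)$-norm operator you need; neither factor alone suffices, and the norm-one property for an arbitrary $\ndot$ cannot be extracted from the interpolation scheme by merely shrinking the mesh. (A secondary, fixable point: the MAP for $\free{\R^N}$ as a black box gives no control whatsoever on the support of $S\delta(x)$, so the locality requirement genuinely forces you to build $S$ by hand rather than cite it.) Your preliminary step on $\hat\Psi\circ\hat i\to I$ and the final norm bookkeeping are fine once such an $S$ is supplied.
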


The proof of this theorem can be found in Section \ref{s:MAP} and relies on statements from Section \ref{s:preparatory}. The methods we use depend on the geometry of $M$ and, in particular,
that of its boundary $\bdy M$. In Section \ref{s:ldc-sets} we establish a sufficient condition on $\bdy M$
for $M$ to satisfy the hypotheses of Theorem \ref{thm:MAP} and thus for $\free{M}$ to admit the MAP with respect to any norm on $\R^N$.

Theorem \ref{thm:MAP} yields the following corollary, which we prove after Corollary \ref{cor:MAP}
below.

\begin{cor}\label{cor:finite-dim-convex}
Let $N \geqslant 1$ and let $M\subseteq \R^N$ be compact and convex. Then $\free{M}$ has the MAP
with respect to any norm on $\R^N$.
\end{cor}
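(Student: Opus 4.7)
The plan is to verify the hypotheses of Theorem \ref{thm:MAP} for any nonempty compact convex $M \subseteq \R^N$, at which point the MAP for $\free{M}$ follows immediately. The main obstacle I anticipate is that $M$ may be lower-dimensional in $\R^N$, so that $M$ has empty interior in $\R^N$ and no dilation of $M$ about one of its own points can serve as $\hat M$. The resolution is that $\free{M}$ depends only on $M$ as a metric space, so I may apply Theorem \ref{thm:MAP} with $\R^N$ replaced by a smaller ambient space.

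To this end, I would let $V$ denote the affine hull of $M$ and set $k=\dim V$. When $k=0$ we have $M=\{x_0\}$ and $\free{M}=\{0\}$ trivially has MAP, so I may assume $k\geqslant 1$. The restriction of $\ndot$ to $V$ makes $V$ affinely isometric to $\R^k$ equipped with some norm, so it suffices to produce $\hat M\subseteq V$ with $M\subseteq\intr_V(\hat M)$, together with a suitable Lipschitz $\Psi:\hat M\to M$. Inside $V$ the compact convex set $M$ has nonempty relative interior, and I would fix $x_0\in\intr_V(M)$.

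Given $\xi>0$, for a sufficiently small $\eta>0$ I would set
\[
\hat M \,=\, x_0 + (1+\eta)(M - x_0), \qquad \Psi(x) \,=\, x_0 + (1+\eta)^{-1}(x - x_0).
\]
Then $\Psi(\hat M)=M$ with $\Lip(\Psi)=(1+\eta)^{-1}<1+\xi$, and a direct computation gives
\[
\n{x-\Psi(x)} \,=\, \tfrac{\eta}{1+\eta}\n{x-x_0} \,\leqslant\, \eta\,\diam(\hat M),
\]
which is at most $\xi$ once $\eta$ is small enough. For the inclusion $M\subseteq\intr_V(\hat M)$, I would observe that, for any $y\in M$, the point $\Psi(y)=(1+\eta)^{-1}(y-x_0)+x_0$ is a nontrivial convex combination of the relative interior point $x_0$ and $y$, hence $\Psi(y)\in\intr_V(M)$; this yields $y\in\intr_V(\hat M)$. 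Theorem \ref{thm:MAP}, applied inside $V\cong\R^k$, then delivers the MAP for $\free{M}$.
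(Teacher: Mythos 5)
Your proof is correct, but it takes a genuinely different route from the paper's. The paper derives this corollary from its general machinery: after the same reduction to the (affine) span of $M$, it invokes Proposition \ref{fat-star-shaped} to show that a compact convex set with non-empty interior is \emph{locally downwards closed}, and then Corollary \ref{cor:MAP}, whose engine is Proposition \ref{p:enlarging} --- a fairly involved construction of $\hat M$ and $\Psi$ for arbitrary compact locally downwards closed sets, using small shear-type perturbations $T_{\theta,x,r,u}$ glued by a Lipschitz partition of unity, Brouwer's invariance of domain, and a connectedness argument. You instead verify the hypotheses of Theorem \ref{thm:MAP} directly by dilating $M$ about a relative interior point $x_0$ and taking $\Psi$ to be the inverse contraction; convexity makes every step immediate ($\Lip(\Psi)=(1+\eta)^{-1}$, the uniform bound $\n{x-\Psi(x)}\leqslant\eta\,\diam(\hat M)$, and $M\subseteq\intr_V(\hat M)$ via the standard fact that the segment from a relative interior point to any point of a convex set stays in the relative interior, together with the fact that the dilation is a homeomorphism carrying $\intr_V(M)$ onto $\intr_V(\hat M)$). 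All the details check out, including your separate treatment of the zero-dimensional case and the observation that $\free{M}$ depends only on the metric, so one may work inside the affine hull equipped with the restricted norm. What each approach buys: yours is far more elementary and self-contained for the convex case; the paper's detour through locally downwards closed sets costs much more work but yields the MAP for a strictly larger class of compact subsets of $\R^N$, of which convex bodies are only one instance.
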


The reader should compare this result to \cite[Corollary 5]{go:14} mentioned above, which asserts the existence
of a compact convex subset $M$ of an infinite-dimensional separable Banach space, whose Lipschitz-free space
$\free{M}$ fails the approximation property.

We do not know if $\free{M}$ has the MAP for all subsets $M \subseteq \R^N$.

\section{Locally downwards closed sets}\label{s:ldc-sets}

In this section we introduce a class of subsets of $\R^N$ satisfying the assumptions of Theorem \ref{thm:MAP}. Given $M \subseteq \R^N$, let $\intr(M)$ denote the interior of $M$.

\begin{defn}\label{def:downwards-closed}
Let $N\geqslant 1$ and $M \subseteq \R^N$. Given open $U \subseteq \R^N$ and $u \in \R^N$,
we shall say that $M$ is {\em downwards closed relative to $U$ and $u$} if it is closed and $y-tu \in \intr(M)$
whenever $y\in U\cap M$, $t>0$ and $y-tu \in U$. In addition, we will say that $M$
is {\em locally downwards closed} if, for every $x\in \R^N$, there is an open set $U \ni x$
and a vector $u\neq 0$, such that $M$ is downwards closed relative to $U$ and $u$.
\end{defn}

It is clear that this notion does not depend on the choice of norm on $\R^N$.
To test a set to see if it is locally downwards closed, it is only necessary to check
the condition at points of the boundary $\bdy M$:\ if $x \in \intr(M)$ or $x \in \R^N\setminus M$, then
$M$ is downwards closed with respect to $\intr(M)$ and
$\R^N\setminus M$, respectively, and any non-zero $u$. Thus, local downwards closure is a
regularity condition on $\bdy M$. It is designed to mimic the notion that, locally, the
boundary is the graph of a continuous function (subject to a suitable change of coordinates), without
having to mention any functions in the definition.

Certainly, any closed convex subset of $\R^N$ having non-empty interior is locally
downwards closed, as the next proposition shows. Given $x,y \in \R^N$ and $s>0$, let $[x,y]$ denote the
straight line segment between $x$ and $y$ and let $B(x,s)$  and $U(x,s)$ be the closed and open balls in
$\R^N$ having centre $x$ and radius $s$ with respect to the Euclidean norm $\pndot{2}$, respectively.

\begin{prop}\label{fat-star-shaped}
Let $M \subseteq \R^N$, $N\geqslant 1$, be closed and imagine that the set
\[
M_0 \;=\; \set{x \in M}{\text{$[x,y] \subseteq M$ for all $y \in M$}},
\]
contains an interior point $w$. Then $M$ is locally downwards closed.
\end{prop}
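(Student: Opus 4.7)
The plan is to verify the defining property of local downwards closure only at boundary points $x \in \bdy M$, since, as already observed after Definition \ref{def:downwards-closed}, interior and exterior points are automatically fine. Fix then $x \in \bdy M$. As $w$ is interior to $M_0$ there is $r > 0$ with $U(w,r) \subseteq M_0$, so in particular $w \in \intr(M)$, whence $u := x - w \neq 0$. I would choose $U := U(x,\rho)$ for a suitably small $\rho > 0$, and argue that these $U$ and $u$ witness the condition at $x$.

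The guiding idea is geometric: subtracting a small positive multiple of $u = x - w$ moves a point $y$ near $x$ slightly towards $w$, and the star-shaped structure at $w$ should place the result inside $\intr(M)$. The computation is driven by the identity
\[
y - tu \;=\; (1-t)y + tw', \qquad \text{where } w' := y + (w - x).
\]
Given $y \in U \cap M$ with $y - tu \in U$, a triangle inequality gives $t\n{u} \leqslant \n{y - tu - x} + \n{x - y} < 2\rho$, so choosing $\rho < \tfrac{1}{2}\n{u}$ forces $t \in (0,1)$. Furthermore $\n{w' - w} = \n{y - x} < \rho$, so choosing also $\rho < r$ places $w' \in U(w, r) \subseteq M_0$. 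Hence $y - tu$ is a proper convex combination of $y \in M$ and $w' \in M_0$, which yields $y - tu \in M$ directly from the definition of $M_0$.

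The one substantive step is upgrading this to $y - tu \in \intr(M)$, and it is the only place where I use that $w$ is \emph{interior} to $M_0$ rather than merely a member of it. After possibly shrinking, one has $U(w', \eta) \subseteq U(w, r) \subseteq M_0$ for some $\eta > 0$; since the affine map $z \mapsto (1-t)y + tz$ is a similarity of ratio $t > 0$,
\[
U(y - tu,\, t\eta) \;=\; (1-t)y + t\, U(w', \eta) \;\subseteq\; (1-t)y + tM_0 \;\subseteq\; M,
\]
the last inclusion because any $w'' \in M_0$ can be joined to $y \in M$ by a segment inside $M$. Thus $y - tu$ has an open ball around it contained in $M$, so lies in $\intr(M)$, as required. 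Beyond the bookkeeping of choosing $\rho$ small enough to simultaneously control $t$ and to keep $w'$ inside $U(w,r)$, I do not foresee any genuine obstacle.
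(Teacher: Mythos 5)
Your proposal is correct and follows essentially the same route as the paper: take $u=x-w$, write $y-tu$ as the convex combination $(1-t)y+t(y+w-x)$, and use a small ball around $w$ inside $M_0$ to conclude $y-tu\in\intr(M)$. The only cosmetic differences are that you restrict to $x\in\bdy M$ via the remark after Definition \ref{def:downwards-closed} and show $t<1$ directly, whereas the paper treats every $x\in M\setminus\{w\}$ and instead rules out $t>1$ by showing $y-tu\notin U$ in that case.
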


\begin{proof}\phantom\qedhere
Fix $w$ and $r>0$ such that $U(w,r)\subseteq M_0$. Given $x \in M\setminus \{w\}$,
set $u=x-w$ and $r'=\min\{r,\frac{1}{2}\pn{u}{2}\}$. Given $y\in U(x,r')\cap M$,
set $s=r'-\pn{y-x}{2}>0$. Evidently, $U(y+w-x,s)\subseteq U(w,r)\subseteq M_0$, and thus
$\conv(U(y+w-x,s) \cup \{y\}) \subseteq M$. In particular, if $0< t \leqslant 1$,
then $y-tu = y+t(w-x) \in \intr(M)$. If $t> 1$ then $y-tu \not\in U(x,r')$, because
\[
\pn{x-(y-tu)}{2} \;\geqslant\; t\pn{u}{2}-\pn{y-x}{2} \;>\; 2r'-r' \;=\; r'. \tag*{\qedsymbol}
\]
\end{proof}


In Proposition \ref{p:enlarging} below we show that a compact locally downwards
closed set $M\subseteq\R^N$ satisfies the hypotheses of Theorem \ref{thm:MAP}.

\begin{prop}\label{p:enlarging}
Let $M\subseteq\R^N$, $N\geqslant 1$, be a compact and locally downwards closed set and let $\xi>0$. Then for any norm $\ndot$ on $\R^N$ there
exists a set $\hat{M}\subseteq \R^N$ and a Lipschitz map $\map{\Psi}{\hat{M}}{M}$, such that $M \subseteq \intr(\hat{M})$, $\Lip(\Psi)\leqslant 1 + \xi$ and $\n{x-\Psi(x)} \leqslant \xi$ for all $x \in \hat{M}$.
\end{prop}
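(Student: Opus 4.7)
The plan is to set $\hat M = \{x \in \R^N : d(x,M) \le \eta\}$ for a small $\eta > 0$, and construct $\Psi$ as a small perturbation of the identity of the form
\[
\Psi(x) \;=\; x - \delta \sum_{i=1}^n \chi_i(x)\,u_i,
\]
where $\delta > 0$ is small, the unit vectors $u_i$ come from the locally downwards closed property, and the $\chi_i \colon \R^N \to [0,1]$ are Lipschitz cutoffs. By compactness of $\bdy M$ and Definition~\ref{def:downwards-closed}, one extracts a finite open cover $V_1,\dots,V_n$ of $\bdy M$ by bounded open sets with associated directions $u_1,\dots,u_n$ such that $M$ is downwards closed relative to each $(V_i,u_i)$; we may normalise $\n{u_i}=1$. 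A standard nested shrinking produces open sets $W_i\Subset V'_i\Subset V_i$ whose union $\bigcup_i W_i$ still covers $\bdy M$, and one takes $\chi_i\equiv 1$ on $W_i$ and $\chi_i\equiv 0$ outside $V'_i$.

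The displacement and Lipschitz bounds are immediate from the formula:
\[
\n{x-\Psi(x)} \;\le\; n\delta, \qquad \Lip(\Psi) \;\le\; 1 + \delta\sum_{i=1}^n\Lip(\chi_i),
\]
and both are $\le\xi$ and $\le 1+\xi$ respectively once $\delta$ is chosen small enough. The crux is verifying $\Psi(\hat M) \subseteq M$. For $x \in \hat M$, relabel so that $\chi_i(x) > 0$ exactly for $i = 1, \dots, m$, and set $x_0 = x$, $x_k = x_{k-1} - \delta\chi_k(x)u_k$, so that $\Psi(x) = x_m$. The key inductive step: assuming $x_{k-1} \in V_k \cap M$, downwards closure applied in $(V_k,u_k)$ places $x_k$ in $\intr M\subseteq M$, provided $x_k \in V_k$. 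Since $x \in V'_k$ (as $\chi_k(x)>0$) and the accumulated displacement satisfies $\n{x_{k}-x}\le k\delta\le n\delta$, choosing $\delta$ smaller than $d(V'_i, \R^N\setminus V_i)/n$ for every $i$ ensures that both $x_{k-1}$ and $x_k$ stay inside $V_k$.

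The base case $x_0 = x \in M$ is automatic when $x \in M$. If instead $x \in \hat M \setminus M$, a Lebesgue-number argument applied to the cover $\{W_i\}$ of the compact set $\bdy M$ produces, for small enough $\eta$, an index $j_0$ with $x \in W_{j_0}$. Relabelling so that $j_0 = 1$, we have $\chi_1(x) = 1$. Writing $x = y + v$ with $y \in M$ closest to $x$ (so $\n{v}\le\eta$, and $y \in V'_1$ when $\eta$ is small), downwards closure yields $y - \delta u_1 \in \intr M$. A compactness argument shows that the compact set $K_1 = \{y - \delta u_1 : y \in \overline{V'_1}\cap M\}$ lies inside $\intr M$ with a uniform positive gap $\eta^\ast := d(K_1, \R^N\setminus M) > 0$, and then choosing $\eta < \eta^\ast$ gives $x_1 = (y - \delta u_1) + v \in M$, starting the induction.

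The main technical obstacle is orchestrating the various smallness requirements. After choosing $\delta$ so that $n\delta\le\xi$, $\delta\sum_i\Lip(\chi_i)\le\xi$, and $n\delta < d(V'_i, \R^N\setminus V_i)$ for all $i$, one then chooses $\eta$ strictly less than both $\eta^\ast$ and the Lebesgue number of $\{W_i\}$ covering $\bdy M$. The existence of the uniform depth $\eta^\ast$ rests on compactness of $\overline{V'_i}\cap M$ together with the fact that downwards closure strictly deposits $y - \delta u_i$ in $\intr M$; note that $\eta^\ast$ may be considerably smaller than $\delta$ when $\bdy M$ has cuspidal features, so the dependence $\eta\ll\delta$ in the construction is essential.
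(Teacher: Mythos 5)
Your construction is sound, and it goes in the opposite direction from the paper's. The paper pushes $M$ \emph{outward}: it glues the maps $T_{\theta,x_i,r_i,u_i}$ (which add positive multiples of the $u_i$) with a partition of unity to obtain a near-isometry $\psi$ of a neighbourhood of $M$, shows $M\cap\psi(\bdy M)=\varnothing$ via the convex-hull Lemma \ref{no-conv}, invokes Brouwer's invariance of domain together with a connectedness argument (after first reducing to the case of connected $M$) to conclude $M\subseteq\intr(\psi(M))$, and finally sets $\hat M=\psi(M)$ and $\Psi=\psi^{-1}$, estimating $\Lip(\psi^{-1})$ from the perturbation bound. You instead take $\hat M$ to be a closed $\eta$-neighbourhood of $M$ and push \emph{inward}; the inclusion $\Psi(\hat M)\subseteq M$ is verified by peeling off the displacements $-\delta\chi_k(x)u_k$ one at a time, each step landing in $\intr(M)$ by a single application of Definition \ref{def:downwards-closed} (note that downwards closure is inherited by smaller open sets and is invariant under positive rescaling of $u$, so your bounded $V_i$ and normalised $u_i$ are legitimate). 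This is genuinely simpler: since downward displacements are exactly what the definition controls, you need neither the convex-hull lemma (which the paper requires because a convex combination of points \emph{outside} $M$ must be shown to remain outside), nor invariance of domain, nor the decomposition into connected components, nor an inverse-map estimate; what the paper's route buys in exchange is that its $\Psi$ is a bi-Lipschitz bijection onto $M$, which the proposition does not ask for. Two points to tighten in your write-up, both routine: the depth $\eta^\ast$ must be taken as $\min_i d(K_i,\R^N\setminus M)$ over all indices, since which index plays the role of $i=1$ depends on $x$; and the ``Lebesgue number'' step is really the statement that the open set $\bigcup_i W_i$ contains a uniform neighbourhood of the compact set $\bdy M$, combined with the observation that the nearest point of $M$ to $x\in\hat M\setminus M$ lies on $\bdy M$, so that $y\in V'_{j_0}$ once $\eta<\min_i d\bigl(\overline{W_i},\R^N\setminus V'_i\bigr)$.
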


Most of Section \ref{s:ldc-sets} is concerned with proving Proposition \ref{p:enlarging}.
The next corollary is obtained as an easy consequence.

\begin{cor}\label{cor:MAP}
Let $M\subseteq \R^N$ be a compact and locally downwards closed set. Then $\free{M}$ has the MAP with respect to any norm on $\R^N$.
\end{cor}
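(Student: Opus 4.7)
The plan is simply to combine the two principal results already in hand: Proposition \ref{p:enlarging} supplies exactly the geometric data demanded by Theorem \ref{thm:MAP}. So the corollary should follow at once, and my only real task is to check that the quantifiers match up correctly.

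More precisely, I would proceed as follows. Fix a norm $\ndot$ on $\R^N$ and let $\xi > 0$ be arbitrary. Since $M$ is compact and locally downwards closed, Proposition \ref{p:enlarging} applied with this particular $\xi$ produces a set $\hat{M} \subseteq \R^N$ containing $M$ in its interior, together with a Lipschitz map $\map{\Psi}{\hat{M}}{M}$ satisfying $\Lip(\Psi) \leqslant 1 + \xi$ and $\n{x - \Psi(x)} \leqslant \xi$ for every $x \in \hat{M}$. This is precisely the clause appearing in the hypothesis of Theorem \ref{thm:MAP}: for each $\xi > 0$ the required approximating data exists. Consequently Theorem \ref{thm:MAP} applies to $M$, and $\free{M}$ has the MAP with respect to $\ndot$. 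Since the norm was arbitrary, we are done.

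There is essentially no obstacle here, and in particular no obstacle that depends on the choice of norm on $\R^N$: the notion of being locally downwards closed does not depend on the norm (as noted immediately after Definition \ref{def:downwards-closed}), so the same set $M$ satisfies the hypothesis of Proposition \ref{p:enlarging} no matter which norm we choose. Thus the same proof delivers the MAP for every choice of norm on $\R^N$, and Corollary \ref{cor:MAP} is immediate once Proposition \ref{p:enlarging} and Theorem \ref{thm:MAP} are in place.
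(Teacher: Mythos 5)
Your proposal is correct and matches the paper's own proof, which likewise deduces Corollary \ref{cor:MAP} directly by combining Proposition \ref{p:enlarging} with Theorem \ref{thm:MAP}. Your additional remark that local downwards closure is norm-independent is a fine (if unneeded) check, since Proposition \ref{p:enlarging} is already stated for an arbitrary norm.
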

\begin{proof}
The result follows directly from Theorem \ref{thm:MAP} and Proposition \ref{p:enlarging}.
\end{proof}

Corollary \ref{cor:MAP} allows us to prove Corollary \ref{cor:finite-dim-convex}.

\begin{proof}[Proof of Corollary \ref{cor:finite-dim-convex}]
Without loss of generality we can assume that $0 \in M$. If $X=\lspan{M}$, then $M$ has non-empty
interior relative to $X$. Working now in $X$, the result follows from Proposition
\ref{fat-star-shaped} and Corollary \ref{cor:MAP}.
\end{proof}

To prove Proposition \ref{p:enlarging} we first state and prove a few lemmas. Hereafter, we shall fix $N \geqslant 1$ and some norm $\ndot$ on $\R^N$.
Unless otherwise stated, all Lipschitz constants are taken with respect to $\ndot$. The symbol $\pndot{p}$, $p \in [1,\infty]$, stands for the $\ell_p$-norm on $\R^N$. We will have need of a constant $K>0$ satisfying
$\frac{1}{K}\ndot\leqslant\pndot{1},\pndot{2} \leqslant K\ndot$.

\begin{lem}\label{no-conv} Let $U \subseteq \R^N$ be a convex open set, $k\geqslant 1$ and let $M\subseteq \R^N$ be
downwards closed relative to $U$ and $u_1,\dots,u_k \in \R^N$. Fix $x \in U\setminus \intr(M)$
and $t_i > 0$, $1\leqslant i\leqslant k$, such that $x_i = x + t_i u_i \in U\setminus M$.
Then $M \cap \conv(x_1,\dots,x_k)$ is empty.
\end{lem}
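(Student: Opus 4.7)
The plan is to argue by contradiction. Suppose $y \in M \cap \conv(x_1,\dots,x_k)$, and write $y = \sum_{i=1}^k \lambda_i x_i$ with $\lambda_i \geqslant 0$ and $\sum_i \lambda_i = 1$. The idea is to produce a finite ``descent path'' from $y$ back to $x$, obtained by peeling off the contribution of each $x_i$ in turn, and to show via the downwards closure hypothesis that after the last step we land in $\intr(M)$, contradicting $x \notin \intr(M)$.

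Concretely, I would define the intermediate points
\[
y_j \;=\; \Bigl(\sum_{i=1}^{j} \lambda_i\Bigr) x \,+\, \sum_{i=j+1}^{k} \lambda_i x_i,\qquad j=0,1,\dots,k,
\]
so that $y_0=y$, $y_k=x$ and $y_j = y_{j-1} - \lambda_j t_j u_j$, using $x_i - t_i u_i = x$. Since each $y_j$ is a convex combination of points in $U$ (namely $x$ and some of the $x_i$'s), convexity of $U$ immediately gives $y_j \in U$ for every $j$. This is the step that makes the hypothesis that $U$ be convex essential.

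Then I would show by induction on $j$ that $y_j \in M$, and in fact $y_j \in \intr(M)$ as soon as $\sum_{i \leqslant j} \lambda_i > 0$. The base case $j=0$ is the assumption $y \in M$. For the inductive step, if $\lambda_j = 0$ then $y_j = y_{j-1}$ and there is nothing to prove. If $\lambda_j > 0$, then $t := \lambda_j t_j > 0$ and $y_j = y_{j-1} - t u_j$ lies in $U$, while $y_{j-1}\in M\cap U$ by the inductive hypothesis (using that $M$ is closed, so $\intr(M)\subseteq M$). The hypothesis that $M$ is downwards closed relative to $U$ and $u_j$ then yields $y_j \in \intr(M)$.

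Setting $j=k$ gives $x = y_k \in \intr(M)$ because $\sum_{i=1}^k \lambda_i = 1 > 0$, which contradicts $x \in U\setminus \intr(M)$. The only real subtlety is bookkeeping the case where some $\lambda_j$ vanish, which is handled by carrying the ``$\intr(M)$ once a positive $\lambda$ has appeared'' clause through the induction; the genuinely geometric content of the argument is the observation that the straight line broken path $y_0,\dots,y_k$ stays inside the convex set $U$.
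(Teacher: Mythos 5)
Your proof is correct, and it rests on the same basic mechanism as the paper's: telescope the decomposition $y-x=\sum_{i}\lambda_i t_i u_i$, apply downwards closure once per term, and use convexity of $U$ to keep the intermediate points where the hypothesis applies. The organization, however, is genuinely different. The paper argues by induction on $k$ in the contrapositive direction: it peels off $\lambda_k t_k u_k$ and must rewrite the resulting point as a convex combination of the $k-1$ points $x_1,\dots,x_{k-2},\tilde x$, where $\tilde x = x+\lambda_{k-1}(\lambda_{k-1}+\lambda_k)^{-1}t_{k-1}u_{k-1}$ is an auxiliary point that has to be shown separately to lie in $U\setminus M$; the descent terminates at a single $x_i$, where the hypothesis $x_i\notin M$ is finally invoked. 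You instead descend all the way to $x$ along the explicit path $y_0,\dots,y_k$ and contradict $x\notin\intr(M)$. This buys two things: you avoid the auxiliary point $\tilde x$ and the sub-argument attached to it, and your proof shows that the hypothesis $x_i\in U\setminus M$ is actually redundant, since it already follows from $x\notin\intr(M)$, $t_i>0$ and downwards closure (your one-step argument applied to $y=x_i$). One cosmetic remark: the inclusion $\intr(M)\subseteq M$ holds for any set, so the appeal to closedness of $M$ at that point of your induction is unnecessary.
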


\begin{proof} We proceed by induction on $k$. Let $y=\sum_{i=1}^k \lambda_i x_i
= x + \sum_{i=1}^k \lambda_i t_i u_i \in \conv(x_1,\dots,x_k)\subseteq U$, where
$\lambda_i \geqslant 0$ and $\sum_{i=1}^k \lambda_i=1$. If $k=1$ then
$y=x_1\not\in M$. Now suppose that $k>1$ and that the statement holds for $k-1$. We may assume that $\lambda_{k-1},\lambda_k>0$. Consider
\[
z \;=\; y - \lambda_k t_k u_k \;=\; \bigg(\sum_{i=1}^{k-2} \lambda_i x_i\bigg) + \left(\lambda_{k-1}+\lambda_k\right)\tilde x
\in \conv(x_1,\dots,x_{k-2},\tilde x),
\]
where $\tilde x =x+\lambda_{k-1}\left(\lambda_{k-1}+\lambda_k\right)^{-1}t_{k-1}u_{k-1}$. From the convexity of $U$ and downwards closure it follows that $\tilde x\in U\setminus M$. So, by inductive hypothesis, $z\notin M$. Hence $y\not\in M$, again by downwards closure.
\end{proof}

In the construction of $\hat M$ and $\Psi$ we will make use of a few auxiliary functions
that are perturbations of the identity, both in a Lipschitz and uniform sense.
Fix $\theta>1$, $x \in \R^N$, $r>0$ and $u \in \R^N$, $\pn{u}{2}=1$. We define the map
$\map{T_{\theta,x,r,u}}{\R^N}{\R^N}$ by
\begin{equation}\label{eq:T}
T_{\theta,x,r,u}(y) \;=\; y + (\theta-1)((y-x)\cdot u+r)u,
\end{equation}
where $\cdot$ denotes the scalar product.
For a map $\map{T}{E}{\R^N}$, where $E\subseteq\R^N$, we set
\[
\pn{T}{\infty} \;=\; \sup\set{\n{T(y)}}{y \in E}.
\]

\begin{lem}\label{estimate}
Let $\theta$, $x$, $r$, $u$ and $\;T_{\theta,x,r,u}$ be as above. Imagine that $E\subseteq \R^N$ is bounded, and set $P=\sup\set{\n{y-x}}{y \in E}<\infty$. Then $\Lip(T_{\theta,x,r,u}-I) \leqslant K^2(\theta-1)$ and $\pn{\left(T_{\theta,x,r,u}-I\right)\res{E}}{\infty}\leqslant K(KP+r)(\theta-1)$.
\end{lem}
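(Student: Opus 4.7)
The plan is to prove both estimates directly from the defining formula
\[
(T_{\theta,x,r,u}-I)(y) \;=\; (\theta-1)\bigl((y-x)\cdot u+r\bigr)u,
\]
by bounding the scalar factor via Cauchy--Schwarz and converting between $\ndot$, $\pndot{2}$ using the comparison constant $K$.

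For the Lipschitz estimate, I would take $y,z \in \R^N$ and observe that the affine parts of $T_{\theta,x,r,u}-I$ cancel, leaving
\[
(T_{\theta,x,r,u}-I)(y) - (T_{\theta,x,r,u}-I)(z) \;=\; (\theta-1)\bigl((y-z)\cdot u\bigr)u.
\]
Taking $\ndot$-norms, using $\n{u}\leqslant K\pn{u}{2}=K$, Cauchy--Schwarz $|(y-z)\cdot u|\leqslant \pn{y-z}{2}\pn{u}{2}=\pn{y-z}{2}$, and finally $\pn{y-z}{2}\leqslant K\n{y-z}$, one gets the bound $K^{2}(\theta-1)\n{y-z}$, whence $\Lip(T_{\theta,x,r,u}-I)\leqslant K^{2}(\theta-1)$.

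For the uniform bound on $E$, I would fix $y\in E$ and estimate
\[
\n{(T_{\theta,x,r,u}-I)(y)} \;=\; (\theta-1)\bigl|(y-x)\cdot u+r\bigr|\,\n{u}.
\]
Applying the triangle inequality and Cauchy--Schwarz to the scalar factor gives $|(y-x)\cdot u+r|\leqslant \pn{y-x}{2}+r\leqslant K\n{y-x}+r\leqslant KP+r$, and $\n{u}\leqslant K$. Combining these yields $\n{(T_{\theta,x,r,u}-I)(y)}\leqslant K(KP+r)(\theta-1)$, uniformly in $y\in E$.

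There is really no main obstacle here; both estimates amount to one-line applications of Cauchy--Schwarz combined with the fixed norm-comparison inequalities $\frac{1}{K}\ndot\leqslant \pndot{2}\leqslant K\ndot$. The only thing worth being mindful of is applying the $K$-factor on both sides (once to pass from $\pndot{2}$ to $\ndot$ on $u$, once to pass in the opposite direction on $y-z$ or $y-x$), which is precisely what produces the $K^{2}$ in the Lipschitz bound and the $K(KP+r)$ shape of the uniform bound.
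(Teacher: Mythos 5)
Your proposal is correct and follows essentially the same route as the paper: both estimates are obtained by isolating the scalar factor in the formula for $T_{\theta,x,r,u}-I$, applying Cauchy--Schwarz, and converting between $\ndot$ and $\pndot{2}$ via the constant $K$ exactly as in the paper's computation. Nothing is missing.
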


\begin{proof}\phantom\qedhere
Let $y,z \in \R^N$. Then
\begin{align*}
\n{(T_{\theta,x,r,u}-I)z-(T_{\theta,x,r,u}-I)y} &\;=\; (\theta-1)|((z-x)\cdot u+r)-((y-x)\cdot u+r)|\n{u}\\
&\;\leqslant\; K(\theta-1)|(z-y)\cdot u|\\
&\;\leqslant\; K(\theta-1)\pn{z-y}{2}\pn{u}{2}\\
&\;\leqslant\; K^2(\theta-1)\n{z-y}.
\end{align*}
If $y \in E$ then
\begin{align*}
\n{(T_{\theta,x,r,u}-I)y} &\;=\; (\theta-1)|(y-x)\cdot u+r|\n{u}\\
&\;\leqslant\; K(\theta-1)(\pn{y-x}{2}\pn{u}{2} + r)\\
&\;\leqslant\; K(KP+r)(\theta-1). \tag*{\qedsymbol}
\end{align*}
\end{proof}

Next, we require a lemma about using partitions of unity to glue together
Lipschitz functions. If these functions are sufficiently close to the identity
map, both in a Lipschitz and uniform sense, then the resulting map is close
in both senses as well.

\begin{lem}\label{partition-of-unity}
Let $\xi>0$, $U$ an open subset of a normed space $X$, $(U_j)_{j=1}^k$ an open cover of
$U$, a partition of unity $\map{f_j}{U}{[0,1]}$, $1\leqslant j \leqslant k$,
subordinated to the cover and consisting of $H$-Lipschitz functions,
and functions $\map{\psi_j}{U}{X}$ such that
\[
\Lip(\psi_j-I),\, \pn{\psi_j-I}{\infty} \;\leqslant\; \xi,
\]
$1\leqslant j \leqslant k$. Then the function $\map{\psi}{U}{X}$, defined by
\begin{equation}\label{eq:glued-function}
\psi(x) \;=\; \sum_{j=1}^k f_j(x)\psi_j(x),
\end{equation}
satisfies $\Lip(\psi-I)\leqslant (1+Hk)\xi$ and $\pn{\psi-I}{\infty}\leqslant\xi$.
\end{lem}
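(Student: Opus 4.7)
The plan is to exploit the fact that $\sum_{j=1}^k f_j(x) = 1$ for every $x \in U$ in order to rewrite both $\psi(x)-x$ and its difference $\psi(x)-\psi(y)-(x-y)$ as combinations of the small quantities $\psi_j - I$. This reduces both estimates to direct applications of the convexity of the coefficients $f_j(x)$ together with the Lipschitz control on $f_j$.

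First I would dispatch the uniform estimate. Since the $f_j$ form a partition of unity, for any $x \in U$ we have
\[
\psi(x) - x \;=\; \sum_{j=1}^k f_j(x)\psi_j(x) - \Bigl(\sum_{j=1}^k f_j(x)\Bigr)x \;=\; \sum_{j=1}^k f_j(x)(\psi_j(x)-x),
\]
and the triangle inequality, together with $0 \leqslant f_j(x) \leqslant 1$ and $\pn{\psi_j-I}{\infty} \leqslant \xi$, immediately gives $\n{\psi(x)-x} \leqslant \xi$.

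For the Lipschitz estimate on $\psi - I$, I would split the difference into the two natural pieces coming from rewriting $f_j(x)\psi_j(x) - f_j(y)\psi_j(y) = f_j(x)(\psi_j(x)-\psi_j(y)) + (f_j(x)-f_j(y))\psi_j(y)$, while simultaneously treating the identity term via $x - y = \sum_j f_j(x)(x - y)$. After canceling, one obtains the telescoped decomposition
\[
(\psi(x)-x) - (\psi(y)-y) \;=\; \sum_{j=1}^k f_j(x)\bigl[(\psi_j(x)-x)-(\psi_j(y)-y)\bigr] + \sum_{j=1}^k (f_j(x)-f_j(y))(\psi_j(y)-y).
\]
Bounding the first sum by $\xi\n{x-y}$ using $\Lip(\psi_j - I) \leqslant \xi$ and the fact that $\sum_j f_j(x) = 1$, and bounding the second by $Hk\xi\n{x-y}$ using the $H$-Lipschitz property of each $f_j$ and $\pn{\psi_j-I}{\infty} \leqslant \xi$, yields the desired $(1+Hk)\xi$ bound.

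There is no real obstacle here; the only point requiring a moment's care is the algebraic rearrangement that produces the two-term decomposition above, and in particular the observation that the identity map $I$ must be written as $\sum_j f_j \cdot I$ before taking differences so that the cross-terms involve $\psi_j - I$ (which is small) rather than $\psi_j$ itself (which is not). Once that is noted, the proof is routine triangle-inequality bookkeeping.
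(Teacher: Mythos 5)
Your proof is correct and follows essentially the same route as the paper: the same two-term decomposition of $(\psi-I)(x)-(\psi-I)(y)$, with the first sum controlled by $\Lip(\psi_j-I)\leqslant\xi$ and convexity of the coefficients, and the second by the $H$-Lipschitz bound on the $f_j$ together with $\pn{\psi_j-I}{\infty}\leqslant\xi$. Your explicit remark that $\sum_j(f_j(x)-f_j(y))=0$ lets one replace $\psi_j(y)$ by $\psi_j(y)-y$ in the cross-term is exactly the (implicit) step the paper uses as well.
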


\begin{proof}
Let $x,y \in U$. Then
\begin{align*}
\n{(\psi-I)y-(\psi-I)x} \;=\;& \n{\sum_{j=1}^k f_j(y)((\psi_j(y) - y)-(\psi_j(x)-x))
+(f_j(y)-f_j(x))\psi_j(x)}\\
\;\leqslant\;& \sum_{j=1}^k f_j(y)\n{(\psi_j(y) - y)-(\psi_j(x)-x)}\\
& + \n{\sum_{j=1}^k (f_j(y)-f_j(x))(\psi_j(x)-x)}\\
\;\leqslant\;& \xi\n{y-x} + \sum_{j=1}^k H\n{y-x}\xi \;=\; (1+Hk)\xi\n{y-x}.
\end{align*}
The other inequality follows easily.
\end{proof}

Of course, if $\Lip(\psi-I)\leqslant\xi<1$, then $\Lip(\psi)\leqslant 1+\xi$, and $\psi^{-1}$ exists
and satisfies $\Lip(\psi^{-1})\leqslant 1/(1-\xi)$, because
\begin{align*}
\n{\psi(y)-\psi(x)} &\;=\; \n{y-x-((\psi(x)-x)-(\psi(y)-y))}\\
&\;\geqslant\; \n{y-x}-\n{(\psi(x)-x)-(\psi(y)-y)}\\
&\;\geqslant\; (1-\xi)\n{y-x}.
\end{align*}

Now we are in a position to prove Proposition \ref{p:enlarging}.

\begin{proof}[Proof of Proposition \ref{p:enlarging}]\phantom\qedhere
Initially, we make the assumption that $M$ is connected, in addition to being compact
and locally downwards closed. Once we have dealt with the
connected case, we show how this assumption can be removed.

For each $x \in \bdy M$, let $r_x\in(0,1)$ and $u_x \in \R^N$, $\pn{u_x}{2}=1$, such that
$M$ is downwards closed with respect to $U(x,2r_x)$ and $u_x$. Let $x_1,\dots,x_k \in \bdy M$
such that $(U(x_i,r_i))_{i=1}^k$ is a cover of $\bdy M$, where $r_i=r_{x_i}$.
Set $U_i=U(x_i,r_i)$, $1\leqslant i \leqslant k$, and $U_{k+1}=\intr(M)$.
Define $U=\bigcup_{i=1}^{k+1} U_i \supseteq M$, and let $\map{f_i}{U}{\R^N}$,
$1\leqslant i \leqslant k+1$, be a partition of unity subordinated to $(U_i)_{i=1}^{k+1}$,
such that each $f_i$ is $H$-Lipschitz, for some large enough $H$.

Select $w\in\intr(M)$ and $\ep\in(0,\min\{1,\xi\})$ such that $B(w,\ep)\subseteq M$. Let
\[
s=\min\set{r_j/2r_i}{1\leqslant i,j \leqslant k},
\]
fix
\begin{equation}\label{eq:small-perturb-1}
{\ts \theta \;=\; \min\{1 + \frac{1}{2}K^{-2}(1+(k+1)H)^{-1}\ep, 1+K^{-1}(K\diam(U)+1)^{-1}\ep, 1+s \}},
\end{equation}
let $\psi_i=T_{\theta,x_i,r_i,u_i}$, $1\leqslant i\leqslant k$, where $u_i=u_{x_i}$ and $T_{\theta,x_i,r_i,u_i}$ is as in (\ref{eq:T}), and set $\psi_{k+1}=I$.
From Lemma \ref{estimate} and the fact that $r_i \leqslant 1$ for all $i$, we know that
\begin{equation}\label{eq:small-perturb-2}
\Lip(\psi_i-I) \;\leqslant\; K^2(\theta-1) \quad\text{and}\quad
\pn{(\psi_i-I)\res{U}}{\infty} \;\leqslant\; K(K\diam(U)+1)(\theta-1).
\end{equation}

If $\map{\psi}{U}{\R^N}$ is the map defined in (\ref{eq:glued-function}) (in Lemma \ref{partition-of-unity}), then
(\ref{eq:small-perturb-1}) and (\ref{eq:small-perturb-2}) yield
\[
\Lip(\psi-I) \;\leqslant\; K^2(1+(k+1)H)(\theta-1) \;\leqslant\; {\ts \frac{1}{2}}\ep \;<\; 1,
\]
and
\[
\pn{\psi-I}{\infty} \;\leqslant\; K(K\diam(U)+1)(\theta-1) \;\leqslant\; \ep.
\]
From above, we know therefore that $\psi^{-1}$ exists on $\psi(U)$,
\begin{equation}\label{eq:small-perturb-3}
\Lip(\psi^{-1}) \;\leqslant\; \frac{1}{1-{\ts \frac{1}{2}\ep}} \;\leqslant\; 1+\ep \;\leqslant\; 1+\xi \quad\text{and}\quad
\pn{\psi^{-1}-I}{\infty} \;\leqslant\; \ep \;\leqslant\; \xi.
\end{equation}

According to Brouwer's Theorem of Invariance of Domain, $\psi(U)$ is open in $\R^N$.
This implies that $\intr(\psi(M))$ is the same, relative to both $\psi(U)$ and $\R^N$,
and likewise for $\bdy\psi(M)$, so we can use the terms without fear of ambiguity
(of course, the same applies to $\intr(M)$ and $\bdy M$, relative to $U$ and $\R^N$).
We would like to show that $M \subseteq \intr(\psi(M))$.

First, we show that $M \cap \bdy\psi(M)$ is empty. Since $\psi$ is a homeomorphism of $U$ onto $\psi(U)$, we know that $\bdy\psi(M)=\psi(\bdy M)$.
Let $x \in \bdy M$ and set $I=\set{i\leqslant k+1}{x\in U_i}$. Of course, $I\subseteq\{1,\dots,k\}$
because $U_{k+1}=\intr(M)$. It follows that
\[
\psi(x) \;=\; \sum_{i=1}^{k+1}f_i(x)\psi_i(x) \;=\; \sum_{i \in I} f_i(x)\psi_i(x),
\]
where $\sum_{i\in I} f_i(x)=1$ and
\[
\psi_i(x)\;=\; x+ (\theta-1)((x-x_i)\cdot u_i+r_i)u_i.
\]
Given $i\in I$, we have $0<(x-x_i)\cdot u_i+r_i < 2r_i$, because
$\pn{x-x_i}{2}<r_i$. Consequently,
\begin{equation}\label{eq:small-perturb-4}
0\;<\;\pn{\psi_i(x)-x}{2} \;\leqslant\; 2r_i(\theta-1) \;\leqslant\; 2r_i s \;\leqslant\; r_j,
\end{equation}
whenever $i\in I$ and $1\leqslant j \leqslant k$, by (\ref{eq:small-perturb-1}).

Set $V=\bigcap_{i\in I} U(x_i,2r_i)$. Of course, $x \in \bigcap_{i\in I} U(x_i,r_i)\subseteq V$, and by (\ref{eq:small-perturb-4}),
$\psi_i(x) \in V$ whenever $i \in I$ as well. Since $M$ is downwards closed relative to $U(x_i,2r_i)$
and $u_i$, we must have $\psi_i(x) \notin M$, because $x\not\in \intr(M)$.
Thus, from Lemma \ref{no-conv}, we see that
\[
\psi(x) \;=\; \sum_{i \in I} f_i(x)\psi_i(x) \;\notin\; M.
\]
In particular, $M \cap \bdy\psi(M)=M \cap \psi(\bdy M)$ is empty.

Since $M \cap \bdy\psi(M)$ is empty, we can write $M$ as the union of two disjoint sets
$M \cap \intr(\psi(M))$ and $M\setminus\psi(M)$, which are both open in $M$. Since $\psi(w)\in B(w,\ep)\subseteq M$, we have $\psi(w) \in \psi(\intr(M)) \cap M= \intr(\psi(M)) \cap M$. Therefore,
by the connectedness of $M$, we know that $M=M \cap \intr\psi(M)\subseteq \intr\psi(M)$,
as claimed. We complete the proof in the connected case by setting $\hat{M}=\psi(M)$ and
$\Psi=\psi^{-1}$, and considering (\ref{eq:small-perturb-3}).

We approach the general case by showing that a compact and locally downwards closed set
decomposes into finitely many connected components, each one locally downwards closed.
Then we apply what we have done above to each component and glue the results together.

To prove that $M$ has just finitely many connected components, we begin by showing
that if $x\in M$, then there exists an open set $V\ni x$ such that
$M\cap V$ is connected. Indeed, pick an open Euclidean ball $U$ having centre $x$ and $u\neq 0$
such that $M$ is downwards closed relative to $U$ and $u$, fix any $t>0$ such that $x-tu \in U$,
and then fix $r>0$ such that $U(x-tu,r)\subseteq M\cap U$, which exists by virtue of downwards
closure. We claim that $M\cap V$ is path-connected, where $V$ is the open set
\[
V \;=\; \set{y\in \R^N}{\pn{y-(x-su)}{2} < r \text{ for some }s \in [0,t]} \;\subseteq \; U.
\]
Indeed, if $y \in M$ and $\pn{y-(x-su)}{2} < r$ for some $s \in [0,t]$, then local downwards
closure guarantees that $[y,y-(t-s)u]\subseteq M\cap V$, and as $y-(t-s)u \in U(x-tu,r) \subseteq M\cap V$,
path-connectivity is evident.

Now imagine, for a contradiction, that $M$ possesses
infinitely many connected components. Extract a sequence $(x_n)\subseteq M$ such that each $x_n$
belongs to a different component, and let $x\in M$ be a limit point of this sequence. From
above, there exists an open set $V\ni x$ such that $M\cap V$ is connected, however, this contradicts
that fact that $M\cap V$ must contain infinitely many of the $x_n$, each belonging to different
components of $M$.

Thus $M=\bigcup_{i=1}^p M_i$, where the $M_i$ denote the connected components of $M$.
Of course, each $M_i$ is compact and open in $M$, so each one is itself locally downwards
closed. Select $\alpha \in (0,1)$ with the property that $\n{x-y}\geqslant\alpha$
whenever $x$ and $y$ lie in distinct components. Given $\xi>0$, set $\xi' =
\min\{\frac{1}{2}\xi\alpha,\frac{1}{4}\alpha\}$. Using the result above for connected sets,
take $\hat{M}_i \subseteq \R^N$ and maps $\map{\Psi_i}{\hat{M}_i}{M_i}$, such that
$M_i \subseteq \intr(\hat{M}_i)$, $\Lip(\Psi_i)\leqslant 1 + \xi'$ and $\n{x-\Psi_i(x)}
\leqslant \xi'$ whenever $x \in \hat{M}_i$ and $1\leqslant i\leqslant p$. The $\hat{M}_i$
are pairwise disjoint, because the existence of $x \in \hat{M}_i \cap\hat{M}_j$, $i\neq j$,
would imply that
\[
\alpha \;\leqslant\; \n{\Psi_i(x)-\Psi_j(x)} \;\leqslant\; \n{\Psi_i(x)-x}+\n{x-\Psi_j(x)}
\;\leqslant\; {\ts \frac{1}{2}}\alpha.
\]
Define $\hat{M}=\bigcup_{i=1}^p \hat M_i$ and $\map{\Psi}{\hat{M}}{M}$ by $\Psi(x)=\Psi_i(x)$
whenever $x\in\hat{M}_i$. Certainly, $\n{x-\Psi(x)}\leqslant \xi$. Moreover, given $x,y \in M$,
either they are in the same $\hat{M}_i$, giving $\n{\Psi(x)-\Psi(y)}=\n{\Psi_i(x)-\Psi_i(y)}
\leqslant (1+\xi)\n{x-y}$, or they are in distinct $\hat{M}_i$ and $\hat{M}_j$, respectively, whence
\[
\n{\Psi(x)-\Psi(y)} \;\leqslant\; \n{\Psi_i(x)-x} + \n{\Psi_j(y)-y} + \n{x-y} \;\leqslant\;
(1+\xi)\n{x-y}. \tag*{\qedsymbol}
\]
\end{proof}

\section{Preparatory lemmas}\label{s:preparatory}
In order to prove Theorem \ref{thm:MAP}, we will need to demonstrate the existence of certain operators
on $\free{M}$. However, we shall work mostly with dual operators on the dual space $\Lipz{M}$, because
in our opinion $\Lipz{M}$ is a more `concrete' space than $\free{M}$ and, as a consequence, the dual
operators can be defined and described more easily.

In this section, we prove three lemmas that will be used in the proof of the main result.
Lemma \ref{Lip-perturb} makes use of small Lipschitz perturbations of the identity to map Lipschitz functions
on $M$ to Lipschitz functions on a slightly enlarged set,
without changing the Lipschitz constants very much.
Lemma \ref{l:smoothing} concerns the convolution of Lipschitz functions to make them smooth, and Lemma
\ref{interpolate} addresses the problem of approximating certain smooth Lipschitz functions by `coordinatewise
affine interpolation', again without increasing the Lipschitz constants by very much.

Given $M \subseteq \R^N$ and $r>0$, we define the open set
\begin{equation}\label{eq:M(r)}
M(r) \;=\; \set{x \in \R^N}{d_2(x,\R^N\setminus M) > r},
\end{equation}
where $d_2(\cdot,E)$ denotes distance to the set $E$ with respect to the Euclidean norm $\pndot{2}$.

\begin{lem}\label{Lip-perturb}
Let a compact set $M \subseteq \R^N$ have the property that given $\xi>0$, there
exists a set $\hat{M}\subseteq \R^N$ and a Lipschitz map $\map{\Psi}{\hat{M}}{M}$, such that $M \subseteq \intr(\hat{M})$, $\Lip(\Psi)\leqslant 1 + \xi$ and $\n{x-\Psi(x)} \leqslant \xi$ for all $x \in \hat{M}$.
Then, given $\ep>0$, there is $\hat{M}\subseteq \R^N$ such that $M \subseteq \hat{M}(r)$ for some $r>0$, and there is a dual operator
$\map{Q}{\Lipz{M}}{\Lipz{\hat{M}}}$ (where $M$ and $\hat{M}$ share the same distinguished
point $x_0$), such that $\n{Q}\leqslant 1 + \ep$ and
\[
|f(x)-Qf(x)| \leqslant \ep\Lip(f),
\]
whenever $x \in M$.
\end{lem}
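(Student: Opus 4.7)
The plan is to apply the hypothesis essentially verbatim: given $\ep>0$, take $\xi = \ep/2$, produce $\hat M$ and $\map{\Psi}{\hat M}{M}$ by the hypothesis, and define $Q$ as composition with $\Psi$, slightly adjusted to respect the distinguished basepoint $x_0$.

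First, with $\xi = \ep/2$ and the hypothesis applied, I obtain $\hat M \subseteq \R^N$ with $M \subseteq \intr(\hat M)$ and a Lipschitz map $\map{\Psi}{\hat M}{M}$ satisfying $\Lip(\Psi)\leqslant 1+\xi$ and $\n{x-\Psi(x)}\leqslant\xi$. The claim that $M\subseteq\hat M(r)$ for some $r>0$ is immediate: since $M$ is compact and $M\subseteq\intr(\hat M)$, the continuous function $x\mapsto d_2(x,\R^N\setminus\hat M)$ is strictly positive on $M$ and hence attains a positive minimum $r>0$ there.

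Next, I define $\map{Q}{\Lipz{M}}{\Lipz{\hat M}}$ by
\[
Qf(x) \;=\; f(\Psi(x))-f(\Psi(x_0)),
\]
the subtracted constant being needed because $\Psi$ is not assumed to fix $x_0$. Then $Qf(x_0)=0$, and $|Qf(x)-Qf(y)|=|f(\Psi(x))-f(\Psi(y))|\leqslant\Lip(f)\Lip(\Psi)\n{x-y}$ yields $\Lip(Qf)\leqslant(1+\xi)\Lip(f)$, hence $\n{Q}\leqslant 1+\ep$. For $x\in M$, using $f(x_0)=0$, the triangle inequality gives
\[
|f(x)-Qf(x)| \;\leqslant\; \Lip(f)\bigl(\n{x-\Psi(x)}+\n{\Psi(x_0)-x_0}\bigr) \;\leqslant\; 2\xi\Lip(f) \;=\; \ep\Lip(f).
\]

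Finally, to see that $Q$ is a dual operator I would exhibit its pre-adjoint $\map{T}{\free{\hat M}}{\free{M}}$. The map $x\mapsto\delta_M(\Psi(x))-\delta_M(\Psi(x_0))$ from $\hat M$ into $\free{M}$ is $(1+\xi)$-Lipschitz and vanishes at $x_0$, so it extends by the universal property of Lipschitz-free spaces \cite[Lemma 2.2]{gk:03} to a bounded linear operator $T$ with $\n{T}\leqslant 1+\xi$. A check on the generators $\delta_{\hat M}(x)$ then gives $T^*f(x)=f(\Psi(x))-f(\Psi(x_0))=Qf(x)$ for every $f\in\Lipz{M}$, so $Q=T^*$. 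The only real technical nuance is the basepoint issue: since the hypothesis does not guarantee $\Psi(x_0)=x_0$, one must subtract the constant $f(\Psi(x_0))$, which costs a factor of $2$ in the uniform-approximation error but nothing in the operator norm, hence the choice $\xi = \ep/2$. Everything else is a direct translation of properties of $\Psi$ into properties of $Q$.
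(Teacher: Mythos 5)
Your proof is correct and follows essentially the same route as the paper: the same choice $Qf=f\circ\Psi-f(\Psi(x_0))\ind{\hat M}$, the same compactness argument for $M\subseteq\hat M(r)$, and the same pre-adjoint $\delta_x\mapsto\delta_{\Psi(x)}-\delta_{\Psi(x_0)}$. The only cosmetic difference is that the paper takes $\xi$ strictly less than $\ep/2$, but your choice $\xi=\ep/2$ still yields the required non-strict inequalities.
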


\begin{proof}\phantom\qedhere
Let $\ep>0$. Let $\xi \in (0,\frac{1}{2}\ep)$, take $\hat{M}$ and $\Psi$ from the hypotheses, and define $Qf = f \circ \Psi - f(\Psi(x_0))\ind{\hat M}$.
Evidently, $\map{Q}{\Lipz{M}}{\Lipz{\hat{M}}}$, $\n{Q} \leqslant 1 + \ep$, and $Q$ has predual $Q_*$ given by $Q_*\delta_x = \delta_{\Psi(x)}-\delta_{\Psi(x_0)}$, $x \in \hat M$. By compactness and the fact that $M \subseteq \intr(\hat{M})$,
there exists $r >0$ such that $M \subseteq \hat{M}(r)$. If $x \in M$ then we estimate
\begin{align*}
|f(x)-Qf(x)| &\;=\; |f(x)-f(\Psi(x)) + f(\Psi(x_0))- f(x_0)|\\
&\;\leqslant\; |f(x)-f(\Psi(x))| + |f(\Psi(x_0))- f(x_0)|\\
&\;\leqslant\; (\n{x-\Psi(x)} + \n{\Psi(x_0)-x_0})\Lip(f)\\
&\;\leqslant\; \ep\Lip(f). \tag*{\qedsymbol}
\end{align*}
\end{proof}

We move on to Lemma \ref{l:smoothing}. Following \cite[pp. 629]{E}, we define $\eta:\R^N\to[0,\infty)$ by
\[
\eta(x)=\begin{cases}
A\exp\left(\frac{1}{\pn{x}{2}^2-1}\right)&\textup{ if }\pn{x}{2}<1,\\
0&\textup{ if }\pn{x}{2}\geqslant 1,
\end{cases}
\]
where the constant $A>0$ is chosen so that $\lint{\R^N}{}{\eta(x)}{x}=1$. Next, for each $s>0$, we put
\[
\eta_s(x)=\frac{1}{s^N}\;\eta\left(\frac{x}{s}\right).
\]
Then the function $\eta_s$ lies in $C^\infty(\R^N)$ and satisfies $\lint{\R^N}{}{\eta_s(x)}{x}=1$ and $\supp(\eta_s)\subseteq B(0,s)$.

Consider a bounded set $M\subseteq \R^N$ having non-empty interior, and distinguished point $x_0\in \intr(M)$.
Fix $r>0$ small enough so that $x_0\in M(r)$, where $M(r)$ is as in (\ref{eq:M(r)}).
For a locally integrable map $\map{f}{M}{\R}$ and $x\in M(r)$, define
\[
f_r(x)=(\eta_r\star f)(x)=\lint{M}{}{\eta_r(x-y)f(y)}{y}=\lint{B(0,r)}{}{\eta_r(y)f(x-y)}{y}.
\]
Finally, set
\begin{equation}
\label{eq:S_r}
S_r(f)=f_r-f_r(x_0)\ind{M(r)},
\end{equation}
on $M(r)$.

Given a function $\map{g}{\R^N}{\R}$, we denote by $Dg(x)$ its total derivative at $x$, should it exist.
We shall regard $Dg(x)$ both as a functional on $\R^N$ and as an $n$-tuple in $\R^N$, via the usual
identification.

\begin{lem}
\label{l:smoothing}
In the above setting, the mapping $S_r$ is a dual operator from $\Lipz{M}$ to $\Lipz{M(r)}$ (where $M$ and $M(r)$ have the same distinguished point $x_0$) and satisfies $\n{S_r}\leqslant 1$ and $S_r(\Lipz{M})\subseteq
C^\infty(M(r))$. Moreover, for every $\ep>0$, there exists $\delta>0$ having
the property that for every $f\in\Lipz{M}$, $x\in M(r)$ and $h\in\R^N$,
$\n{h}\leqslant\delta$, such that $x+h \in M(r)$, we have
\begin{equation}
 \label{uniform differentiability}
\left|S_r(f)(x+h)-S_r(f)(x)-DS_r(f)(x)[h]\right|\leqslant \ep\Lip(f)\n{h}.
\end{equation}
Finally, for every $f\in\Lipz{M}$ and $x\in M(r)$,
\begin{equation}
\label{pw convergence}
 |S_r(f)(x)-f(x)|\leqslant 2\Lip(f)Kr,
\end{equation}
where $K$ is as in Section \ref{s:ldc-sets}.
\end{lem}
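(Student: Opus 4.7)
The plan is to handle the four assertions by standard mollifier techniques, with one non-routine twist needed for the uniform differentiability estimate. Since $\eta_r\in C^\infty_c(\R^N)$, differentiating under the integral shows $f_r\in C^\infty(M(r))$ for every locally integrable $f$, hence $S_r(f)\in C^\infty(M(r))$. For the norm estimate, for $x,y\in M(r)$
\[
|f_r(x)-f_r(y)|\;\leqslant\;\lint{B(0,r)}{}{\eta_r(z)\,|f(x-z)-f(y-z)|}{z}\;\leqslant\;\Lip(f)\n{x-y},
\]
so $\Lip(S_r(f))\leqslant\Lip(f)$, and $S_r(f)(x_0)=0$ by construction, giving $\n{S_r}\leqslant 1$. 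To see that $S_r$ is a dual operator, I recall that on norm-bounded subsets of $\Lipz{M}$ the weak$^*$ topology coincides with pointwise convergence on $M$, and is metrizable since $\free{M}$ is separable ($M$ being compact). If $f_n\to f$ pointwise with $\sup_n\Lip(f_n)<\infty$, then $(f_n)$ is uniformly bounded on $M$, and dominated convergence gives $S_r(f_n)(x)\to S_r(f)(x)$ for every $x\in M(r)$; thus $S_r$ is weak$^*$-to-weak$^*$ sequentially continuous on bounded sets, which by Banach--Dieudonn\'e is enough to be a dual operator.

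For the uniform differentiability I would upgrade the smoothness to a quadratic Taylor remainder estimate, which also sidesteps the fact that $M(r)$ need not be convex. Extend $f$ by McShane to a Lipschitz function $\tilde f$ on $\R^N$ with $\Lip(\tilde f)=\Lip(f)$, and set $\tilde f_r=\eta_r\star\tilde f\in C^\infty(\R^N)$; note $\tilde f_r = f_r$ on $M(r)$, since the convolution at $x\in M(r)$ only samples $\tilde f$ in $B(x,r)\subseteq M$. Differentiating twice under the integral sign and using that $\int_{\R^N}D^2\eta_r(w)\,\mathrm{d}w = 0$ (as $\eta_r$ has compact support) allows me to rewrite
\[
D^2\tilde f_r(z)[h,h]\;=\;\lint{\R^N}{}{D^2\eta_r(z-y)[h,h]\,(\tilde f(y)-\tilde f(z))}{y}.
\]
Using $|\tilde f(y)-\tilde f(z)|\leqslant K\Lip(f)\pn{y-z}{2}$ on $B(z,r)$ together with the scaling bound $|D^2\eta_r(w)[h,h]|\leqslant Cr^{-N-2}\pn{h}{2}^2\ind{B(0,r)}(w)$, a direct estimate yields $|D^2\tilde f_r(z)[h,h]|\leqslant C'\Lip(f)\n{h}^2/r$ uniformly in $z\in\R^N$. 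Taylor's theorem with integral remainder applied along the segment $[x,x+h]\subseteq\R^N$ then gives
\[
|S_r(f)(x+h)-S_r(f)(x)-DS_r(f)(x)[h]|\;\leqslant\;C''\Lip(f)\n{h}^2/r,
\]
so choosing $\delta=\ep r/C''$ delivers (\ref{uniform differentiability}). I expect this step to be the main obstacle: a naive bound on $D^2\tilde f_r$ only produces a constant proportional to $\sup_M|f|$, and one must exploit the mean-zero property of $D^2\eta_r$ to recover a $\Lip(f)$-bound, while the McShane extension is what permits invoking Taylor's theorem along the full segment even if it leaves $M(r)$.

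Finally, for (\ref{pw convergence}), since $\eta_r$ integrates to $1$ and is supported in $B(0,r)$,
\[
|f_r(x)-f(x)|\;\leqslant\;\lint{B(0,r)}{}{\eta_r(z)\,|f(x-z)-f(x)|}{z}\;\leqslant\;\Lip(f)\sup_{\pn{z}{2}\leqslant r}\n{z}\;\leqslant\; Kr\Lip(f),
\]
using $\n{\cdot}\leqslant K\pn{\cdot}{2}$, and then $|S_r(f)(x)-f(x)|\leqslant|f_r(x)-f(x)|+|f_r(x_0)-f(x_0)|\leqslant 2Kr\Lip(f)$ since $f(x_0)=0$.
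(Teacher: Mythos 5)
Your proposal is correct, and all four assertions are established, but two sub-steps take genuinely different routes from the paper. For the dual-operator claim, the paper simply exhibits the predual operator explicitly, defining $(S_r)_*\delta_x$ as the compactly supported measure with density $\eta_r(x-y)-\eta_r(x_0-y)$ with respect to Lebesgue measure; your argument via weak$^*$-sequential continuity on bounded sets, metrizability (separability of $\free{M}$) and Banach--Dieudonn\'e/Krein--\v{S}mulian is valid but heavier machinery for the same conclusion. For the uniform differentiability estimate (\ref{uniform differentiability}), the paper stays at first order: it writes the remainder as $\int_M(\eta_r(x+h-y)-\eta_r(x-y)-D\eta_r(x-y)[h])f(y)\,\mathrm{d}y$, bounds the kernel term by $\omega_{D\eta_r}(\n{h})\n{h}$ using the uniform continuity of $D\eta_r$ on $\R^N$ (no convexity of $M(r)$ is needed, since the Taylor expansion is performed on $\eta_r$, not on $f_r$), and bounds $\int_M|f|$ by $A(M)\Lip(f)$ because $f(x_0)=0$ and $M$ is bounded. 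This last point means your stated motivation for the second-order detour is slightly off: a constant proportional to $\sup_M|f|$ \emph{is} already proportional to $\Lip(f)$ in this setting, so the mean-zero trick for $D^2\eta_r$ is not forced. That said, your argument (McShane extension, $\int D^2\eta_r=0$, Taylor with integral remainder) is correct and buys a quantitatively sharper remainder of order $\Lip(f)\n{h}^2/r$, with an explicit $\delta=\ep r/C''$, whereas the paper's $\delta$ is determined implicitly by the modulus of continuity of $D\eta_r$. The norm bound, the smoothness claim, and the estimate (\ref{pw convergence}) are handled exactly as in the paper.
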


\begin{proof}
Let $f\in\Lipz{M}$. Obviously, $S_r(f)(x_0)=0$. For $x,y\in M(r)$, we have
\begin{align}
\label{norm of S_r}
 \nonumber
|S_r(f)(x)-S_r(f)(y)|&=|f_r(x)-f_r(y)|\\\nonumber
&=\left|\lint{B(0,r)}{}{\eta_r(z)(f(x-z)-f(y-z))}{z}\right|\\\nonumber
&\leqslant\lint{B(0,r)}{}{\eta_r(z)\Lip(f)\n{x-y}}{z}\\
&=\Lip(f)\n{x-y}.
\end{align}
So $S_r$ is a well-defined mapping from $\Lipz{M}$ to $\Lipz{M(r)}$. Furthermore, it is
clearly linear and, by (\ref{norm of S_r}), bounded with $\n{S_r}\leqslant 1$.

Since we can identify compactly
supported Borel measures on $M$ with elements of $\free{M}$, we can see that the predual operator
$(S_r)_*$ can be defined by writing $\mathrm{d}((S_r)_*\delta_x) = (\eta_r(x-y) - \eta_r(x_0-y))\mathrm{d}y$,
$x \in M(r)$.

%
The fact that $S_r(f)\in C^\infty(M(r))$ follows from \cite[Appendix C.4, Theorem 6$\,$(i)]{E}. For $x\in M(r)$ and $h\in\R^N$, let
\[
L(x)[h]=\lint{M}{}{D\eta_r(x-y)[h]f(y)}{y}=\lint{B(0,r)}{}{D\eta_r(y)[h]f(x-y)}{y}.
\]
Clearly $L(x)$ is a bounded linear functional on $\R^N$. We will show that $L(x)$ is the derivative of $S_r(f)$ at $x$ and that the differentiability is uniform in the sense of (\ref{uniform differentiability}). Indeed, let $h\in\R^N$ be such that $x+h\in M(r)$. Then, by \cite[Corollary 4.99]{HJ},
\begin{align*}
&\; |S_r(f)(x+h)-S_r(f)(x)-L(x)[h]|\\
=&\; |f_r(x+h)-f_r(x)-L(x)[h]|\\
=&\; \left|\lint{M}{}{\left(\eta_r(x+h-y)-\eta_r(x-y)-D\eta_r(x-y)[h]\right)f(y)}{y}\right|\\
\leqslant &\; \lint{M}{}{\left|\left(\eta_r(x+h-y)-\eta_r(x-y)-D\eta_r(x-y)[h]\right)f(y)\right|}{y}\\
\leqslant &\; A(M)\Lip(f)\omega_{D\eta_r}(\n{h})\n{h},
\end{align*}
where $A(M)>0$ is a constant depending only on $M$, and $\omega_{D\eta_r}$ is the
modulus of continuity of $D\eta_r$. Hence $DS_r(f)(x)=L(x)$ and (\ref{uniform differentiability})
holds whenever $\delta>0$ is chosen to satisfy $\omega_{D\eta_r}(\delta)\leqslant\frac{\ep}{A(M)}$.

To conclude, for any $x\in M(r)$,
\begin{align*}
 |f_r(x)-f(x)|&=\left|\lint{B(0,r)}{}{\eta_r(y)(f(x-y)-f(x))}{y}\right|\\
&\leqslant \lint{B(0,r)}{}{\eta_r(y)|f(x-y)-f(x)|}{y}\\
&\leqslant \Lip(f)Kr.
\end{align*}
Thus, for every $x\in M(r)$, we obtain
\[
|S_r(f)(x)-f(x)|=|f_r(x)-f_r(x_0)-f(x)+f(x_0)| \leqslant 2\Lip(f)Kr,
\]
which yields (\ref{pw convergence}).
\end{proof}

Finally we address Lemma \ref{interpolate} and the approximation of smooth Lipschitz functions
by coordinatewise affine functions.
Fix $w \in \R^N$.
We define a closed hypercube $C\subseteq\R^N$ having edge length $\delta>0$ and vertices
$v_\gamma \in \R^N$, $\gamma \in \{0,1\}^N$, given by $v_\gamma
= w + \delta \gamma$. We will write $V_C$ for the set of all vertices of $C$. 

Imagine that $f$ is a real-valued function whose domain of definition includes the set $V_C$.
We define the {\em interpolation function} $\Lambda(f,C)$ on $\R^N$ by
\begin{equation}\label{def:lambda}
\Lambda(f,C)(x)=\sum_{\gamma\in\{0,1\}^N}\left(\prod_{i=1}^N\left(1-\gamma_i+(-1)^{\gamma_i+1}\frac{x_i-w_i}{\delta}\right)\right)f(v_\gamma).
\end{equation}

This is the same $\Lambda(f,C)$ as defined in \cite[Section 3.1]{lp:13} and \cite[Section 2.1]{hp:14},
except that in those cases the function is defined inductively, rather than by means of an explicit formula.
This function is coordinatewise affine, i.e., $t \mapsto
\Lambda(f,C)(x_1,\dots,x_{i-1},t,x_{i+1},\dots,x_N)$ is affine whenever $1 \leqslant i \leqslant N$.
Of course, $\Lambda(f,C)$ agrees with $f$ on the vertices of $C$ and, moreover, it is the only
coordinatewise affine function to do so. In the following lemma, we estimate
the Lipschitz constant of $\Lambda(f,C)$ on $C$, given a certain uniform differentiability assumption on $f$.
Below, the sequence $(e_i)_{i=1}^N$ denotes the standard unit vector basis of $\R^N$.

\begin{lem}\label{interpolate}
Let $\ep>0$, $U\subseteq \R^N$ be open and let $\map{f}{U\subseteq \R^N}{\R}$ be a differentiable Lipschitz function. Moreover, suppose there exists $\delta>0$ such that for each $x\in U$ and each $h\in\R^N$ with $\n{h}\leqslant K\delta$ and $x+h\in U$, we have
\begin{equation}\label{unif2}
|f(x+h)-f(x)-Df(x)[h]| \;\leqslant\; \ep\n{h}.
\end{equation}
Then, given a hypercube $C\subseteq U$ as above, having edge length $\delta$, and $\Lambda(f,C)$ as in $(\ref{def:lambda})$, we have
\begin{equation}
 \label{eq:lipschitz constant on a cube}
\Lip(\Lambda(f,C)\res{C})\;\leqslant\; K^2\ep+\Lip(f).
\end{equation}

In addition, for every $x\in C$,
\begin{equation}
\label{eq:uniform approximation on cubes}
|\Lambda(f,C)(x)-f(x)|\leqslant \sqrt NK\Lip(f)\delta.
\end{equation}
\end{lem}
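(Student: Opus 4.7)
The plan is to exploit a probabilistic interpretation of $\Lambda(f,C)$. For $x \in C$, set $p_i = (x_i - w_i)/\delta \in [0,1]$, and let $V = (V_1,\dots,V_N) \in \{0,1\}^N$ be a vector of independent Bernoulli random variables with $\mathbb{P}(V_i=1) = p_i$. Writing $v_V = w + \delta V$ for the corresponding random vertex, the formula~(\ref{def:lambda}) becomes $\Lambda(f,C)(x) = \mathbb{E}[f(v_V)]$. Since $\Lambda(f,C)$ is affine in each $p_i$, a direct computation gives
\[
\partial_i\Lambda(f,C)(x) \;=\; \mathbb{E}\bigg[\frac{f(v_{V^{(i)}} + \delta e_i) - f(v_{V^{(i)}})}{\delta}\bigg],
\]
where $V^{(i)} := V - V_i e_i$, so that $v_{V^{(i)}} = v_V - V_i\delta e_i$.

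The crucial trick for~(\ref{eq:lipschitz constant on a cube}) will be to apply the uniform differentiability~(\ref{unif2}) \emph{at the common point} $v_V$, rather than at the $V_i$-dependent base $v_{V^{(i)}}$. When $V_i = 0$, $v_{V^{(i)}} = v_V$; when $V_i = 1$, $v_{V^{(i)}} = v_V - \delta e_i$. In either case, invoking~(\ref{unif2}) at $v_V$ with step $\pm\delta e_i$ (of norm at most $K\delta$) gives
\[
(f(v_{V^{(i)}} + \delta e_i) - f(v_{V^{(i)}}))/\delta \;=\; Df(v_V)[e_i] + R_{V,i},\qquad|R_{V,i}|\leqslant\ep\n{e_i}.
\]
Summing over $i$ and using linearity of $Df(v_V)$,
\[
D\Lambda(f,C)(x)[h] \;=\; \mathbb{E}\big[Df(v_V)[h]\big] + \sum_{i=1}^N h_i\mathbb{E}[R_{V,i}].
\]
Pointwise $|Df(v_V)[h]|\leqslant\Lip(f)\n{h}$, so the first term is at most $\Lip(f)\n{h}$; the second is at most $\ep\sum_i|h_i|\n{e_i} \leqslant \ep K\pn{h}{1} \leqslant K^2\ep\n{h}$, using $\n{e_i}\leqslant K$ and $\pn{h}{1}\leqslant K\n{h}$. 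As $C$ is convex, integrating this pointwise estimate along line segments yields~(\ref{eq:lipschitz constant on a cube}). The main obstacle is keeping the constant in front of $\ep$ down to $K^2$:\ centring the differentiability at the common point $v_V$ avoids an extra comparison between $Df(v_V)$ and $Df(v_V - \delta e_i)$ that would otherwise inflate the error.

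The second estimate~(\ref{eq:uniform approximation on cubes}) is more elementary. Since the coefficients in~(\ref{def:lambda}) are non-negative on $C$ and sum to $1$, $\Lambda(f,C)(x)$ is a convex combination of the vertex values $f(v_\gamma)$, hence
\[
|\Lambda(f,C)(x) - f(x)| \;\leqslant\; \max_\gamma|f(v_\gamma) - f(x)| \;\leqslant\; \Lip(f)\max_\gamma\n{v_\gamma - x}.
\]
For $x, v_\gamma \in C$ the difference $v_\gamma - x$ lies in $\delta[-1,1]^N$, so $\pn{v_\gamma - x}{2}\leqslant\sqrt{N}\delta$ and thus $\n{v_\gamma - x} \leqslant K\pn{v_\gamma - x}{2} \leqslant K\sqrt{N}\delta$, which yields~(\ref{eq:uniform approximation on cubes}).
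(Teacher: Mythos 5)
Your proposal is correct and is essentially the paper's own argument in probabilistic dress: the identity $\Lambda(f,C)(x)=\mathbb{E}[f(v_V)]$, the formula for $\partial_i\Lambda$, and the re-centring of each difference quotient at the vertex $v_V$ (so that (\ref{unif2}) applies with increment $\pm\delta e_i$ of norm at most $K\delta$) correspond exactly to the paper's pairing of $\gamma$ with $\tilde\gamma$ and its expression of $D\Lambda(f,C)(z)$ as a convex combination of the vectors of vertex-based difference quotients. Your bookkeeping of the error ($\n{e_i}\leqslant K$ and $\pn{h}{1}\leqslant K\n{h}$, done in the primal rather than via $\tndot\leqslant K\pndot{\infty}$) yields the same constant $K^2\ep$, and the uniform estimate (\ref{eq:uniform approximation on cubes}) is proved identically.
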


\begin{proof}\phantom\qedhere
Fix a hypercube $C\subseteq U$ having edge length $\delta$ and
let $z\in \intr(C)$. We take $j\in\{1,\dots,N\}$ and compute the $j$-th partial derivative
of the function $\Lambda(f,C)$ at $z$. By (\ref{def:lambda}), we have
\begin{align*}
\frac{\partial\Lambda(f,C)}{\partial x_j}(z)&=\sum_{\gamma\in\{0,1\}^N}\left(\prod_{\substack{i=1\\i\neq j}}^N\left(1-\gamma_i+(-1)^{\gamma_i+1}\frac{z_i-w_i}{\delta}\right)\right)\frac{(-1)^{\gamma_j+1}}{\delta}f(v_\gamma)\\
&=\sum_{\substack{\gamma\in\{0,1\}^N\\\gamma_j=0}}\left(\prod_{\substack{i=1\\i\neq j}}^N\left(1-\gamma_i+(-1)^{\gamma_i+1}\frac{z_i-w_i}{\delta}\right)\right)\frac{f(v_\gamma+\delta e_j)-f(v_\gamma)}{\delta}.
\end{align*}
Now, for any $\gamma\in\{0,1\}^N$ such that $\gamma_j=0$, we can write
\begin{align*}
\frac{f(v_\gamma+\delta e_j)-f(v_\gamma)}{\delta}&=\left(1-\frac{z_j-w_j}{\delta}\right)\frac{f(v_\gamma+\delta e_j)-f(v_\gamma)}{\delta}\\
&\phantom{=}+\frac{z_j-w_j}{\delta}\frac{f(v_\gamma)-f(v_\gamma+\delta e_j)}{-\delta}\\
&=\left(1-\gamma_j+(-1)^{\gamma_j+1}\frac{z_j-w_j}{\delta}\right)\frac{f(v_\gamma+(-1)^{\gamma_j}\delta e_j)-f(v_\gamma)}{(-1)^{\gamma_j}\delta}\\
&\phantom{=}+\left(1-\tilde\gamma_j+(-1)^{\tilde\gamma_j+1}\frac{z_j-w_j}{\delta}\right)\frac{f(v_{\tilde\gamma}+(-1)^{\tilde\gamma_j}\delta e_j)-f(v_{\tilde\gamma})}{(-1)^{\tilde\gamma_j}\delta},
\end{align*}
where $\tilde\gamma\in\{0,1\}^N$ satisfies $\tilde\gamma_j=1$ and
$\tilde\gamma_i=\gamma_i$ for $i\neq j$. Hence
\[
\frac{\partial\Lambda(f,C)}{\partial x_j}(z)=\sum_{\gamma\in\{0,1\}^N}\left(\prod_{i=1}^N\left(1-\gamma_i+(-1)^{\gamma_i+1}\frac{z_i-w_i}{\delta}\right)\right)\frac{f(v_\gamma+(-1)^{\gamma_j}\delta e_j)-f(v_\gamma)}{(-1)^{\gamma_j}\delta}.
\]
So, for the total derivative of the function $\Lambda(f,C)$ at $z$ we obtain
\begin{align*}
D\Lambda(f,C)(z)&=\sum_{\gamma\in\{0,1\}^N}\Bigg(\left(\prod_{i=1}^N\left(1-\gamma_i+(-1)^{\gamma_i+1}\frac{z_i-w_i}{\delta}\right)\right)\\
&\phantom{=}\times\left(\frac{f(v_\gamma+(-1)^{\gamma_j}\delta e_j)-f(v_\gamma)}{(-1)^{\gamma_j}\delta}\right)_{j=1}^N\Bigg).
\end{align*}
Let $\tndot$ denote the dual of $\ndot$. For every $\gamma\in\{0,1\}^N$, we have
\begin{align*}
\bigg|\kern-.9pt\bigg|\kern-.9pt\bigg|\bigg(&\frac{f(v_\gamma+(-1)^{\gamma_j}\delta e_j)-f(v_\gamma)}{(-1)^{\gamma_j}\delta}\bigg)_{j=1}^N\bigg|\kern-.9pt\bigg|\kern-.9pt\bigg|\\
&\leqslant\bigg|\kern-.9pt\bigg|\kern-.9pt\bigg|\left(\frac{f(v_\gamma+(-1)^{\gamma_j}\delta e_j)-f(v_\gamma)-Df(v_\gamma)[(-1)^{\gamma_j}\delta e_j]}{(-1)^{\gamma_j}\delta}\right)_{j=1}^N
\bigg|\kern-.9pt\bigg|\kern-.9pt\bigg|+\tn{Df(v_\gamma)}\\
&\leqslant \frac{K}{\delta}\pn{\big(f(v_\gamma+(-1)^{\gamma_j}\delta e_j)-f(v_\gamma)-Df(v_\gamma)[(-1)^{\gamma_j}\delta e_j]\big)_{j=1}^N}{\infty}+\Lip(f).
\end{align*}
Thus, by (\ref{unif2}),
\begin{align*}
\bigg|\kern-.9pt\bigg|\kern-.9pt\bigg|\left(\frac{f(v_\gamma+(-1)^{\gamma_j}\delta e_j)-f(v_\gamma)}{(-1)^{\gamma_j}\delta}\right)_{j=1}^N\bigg|\kern-.9pt\bigg|\kern-.9pt\bigg| &\leqslant \frac{K}{\delta}\cdot\ep\n{(-1)^{\gamma_j}\delta e_j} + \Lip(f)\\
&\leqslant K^2\ep+\Lip(f).
\end{align*}
To conclude, since $D\Lambda(f,C)(z)$ lies in the convex hull of the set
\[
\set{\left(\frac{f(v_\gamma+(-1)^{\gamma_j}\delta e_j)-f(v_\gamma)}{(-1)^{\gamma_j}\delta}\right)_{j=1}^N} {\gamma\in\{0,1\}^N},\]
we have
\[
\tn{D\Lambda(f,C)(z)}\leqslant  K^2\ep+\Lip(f).
\]

Therefore $\Lip(\Lambda(f,C)\res{C})\leqslant K^2\ep+\Lip(f)$.

Finally, if $x\in C$, then
\begin{align*}
|\Lambda(f,C)(x)-f(x)|&= \left|\sum_{\gamma\in\{0,1\}^N}\left(\prod_{i=1}^N\left(1-\gamma_i+(-1)^{\gamma_i+1}\frac{x_i-w_i}{\delta}\right)\right)f(v_\gamma)-f(x)\right|\\
&\leqslant \sum_{\gamma\in\{0,1\}^N}\left(\prod_{i=1}^N\left(1-\gamma_i+(-1)^{\gamma_i+1}\frac{x_i-w_i}{\delta}\right)\right)\left|f(v_\gamma)-f(x)\right|\\
&\leqslant \sum_{\gamma\in\{0,1\}^N}\left(\prod_{i=1}^N\left(1-\gamma_i+(-1)^{\gamma_i+1}\frac{x_i-w_i}{\delta}\right)\right)\n{v_\gamma-x}\Lip(f).
\end{align*}
As $\n{v_\gamma-x}\leqslant K\pn{v_\gamma-x}{2} \leqslant \sqrt NK\delta$, we have
\[
|\Lambda(f,C)(x)-f(x)|\leqslant \sqrt NK\Lip(f)\delta. \tag*{\qedsymbol}
\]
\end{proof}

\begin{rem}
The interpolation result \cite[Lemma 3.2]{lp:13} follows quickly from the proof above.
In \cite{lp:13}, $\R^N$ is considered only with respect to $\pndot{1}$.
If $\ndot=\pndot{1}$, then for each $\gamma \in \{0,1\}^N$ we get
\begin{align*}
\bigg|\kern-.9pt\bigg|\kern-.9pt\bigg|\bigg(\frac{f(v_\gamma+(-1)^{\gamma_j}\delta e_j)-f(v_\gamma)}{(-1)^{\gamma_j}\delta}\bigg)_{j=1}^N\bigg|\kern-.9pt\bigg|\kern-.9pt\bigg|
&\;=\; \bigg\|\bigg(\frac{f(v_\gamma+(-1)^{\gamma_j}\delta e_j)-f(v_\gamma)}{(-1)^{\gamma_j}\delta}\bigg)_{j=1}^N\bigg\|_\infty\\
&\;\leqslant\; \Lip(f).
\end{align*}
Consequently, $\tn{D\Lambda(f,C)(z)}\leqslant \Lip(f)$ and $\Lip(\Lambda(f,C)\res{C})\leqslant\Lip(f)$.
\end{rem}

\section{The proof of Theorem \ref{thm:MAP}}\label{s:MAP}

The proof of Theorem \ref{thm:MAP} combines the processes spelled out in Lemmas \ref{Lip-perturb} -- \ref{interpolate}:
first, we approximate a given Lipschitz function by another Lipschitz function
on a slightly larger domain, then we apply a convolution to produce a smooth Lipschitz function,
and finally we approximate this smooth function by a number of locally coordinatewise affine
functions.

\begin{proof}[Proof of Theorem \ref{thm:MAP}]
We will build a sequence $(T_n)_{n=1}^{\infty}$ of finite-rank dual operators
on $\Lipz{M}$ such that $\n{T_n}\leqslant 1+n^{-1}$ for all $n\in\N$
and $T_n(f)(x) \to f(x)$ uniformly, simultaneously in $x \in M$ and $f\in B_{\Lipz{M}}$. Once we have
done this, we indicate why this yields the MAP at the end of the proof.

Let $n\in\N$. By Lemma \ref{Lip-perturb}, there is $\hat M_n\subseteq\R^N$ and
\begin{equation}
 \label{eq:r}
r_n \in \left(0,\frac{1}{n+1}\right),
\end{equation}
such that $M\subseteq \hat M_n((\frac{1}{2}K+2)r_n)$ (where $K$ is as in Section \ref{s:ldc-sets} and $\hat M_n((\frac{1}{2}K+2)r_n)$ is as in (\ref{eq:M(r)})), and there exists
a bounded linear operator $Q_n:\Lipz{M}\to\Lipz{\hat M_n}$ satisfying $\n{Q_n}\leqslant 1+(3n)^{-1}$
and
\begin{equation}\label{eq:Q}
|f(x)-Q_n(f)(x)| \leqslant n^{-1}\Lip(f),
\end{equation}
whenever $f\in\Lipz{M}$ and $x \in M$. Next, we press the smoothing operator $\map{S_{r_n}}{\Lipz{\hat M_n}}{\Lipz{\hat M_n(r_n)}}$
defined in (\ref{eq:S_r}) into service. By Lemma \ref{l:smoothing}, $S_{r_n}(\Lipz{\hat M_n})\subseteq
C^\infty(\hat M_n(r_n))$, and there exists
\begin{equation}
\label{eq:delta}
\delta_n\in\left(0,\frac{r_n}{2\sqrt NK(3n+1)}\right),
\end{equation}
such that for every $g\in\Lipz{\hat M_n}$, every $x\in \hat M_n(r_n)$ and every $h\in\R^N$,
$\n{h}\leqslant\delta_n$, satisfying $x+h\in \hat M_n(r_n)$, we have
\begin{equation}\label{eq:unif-main}
\left|S_{r_n}(g)(x+h)-S_{r_n}(g)(x)-DS_{r_n}(g)(x)[h]\right|\leqslant \frac{1}{3nK^2}\Lip(g)\n{h}.
\end{equation}
Consider the cover $\mathcal C_n$ of $\hat M_n(2r_n)$ by hypercubes of edge length $\delta_n$,
determined by the mesh $\mathcal Z_n=\{x_0+\delta_n\zeta, \zeta\in \Z^N\}$. In other words,
\[
\mathcal C_n=\set{C\subseteq \R^N}{C \text{ is a hypercube}, V_C=C\cap \mathcal Z_n \text{ and }
C\cap \hat M_n(2r_n)\neq\varnothing}
\]
(here we recall that $V_C$ is the set of all vertices of $C$).
According to (\ref{eq:delta}), we have $\bigcup \mathcal C_n\subseteq \hat M_n(r_n)$.
Define $V_n=\bigcup_{C\in\mathcal C_n}V_C$.

Given $f\in \Lipz{M}$ and $x\in M$, set
\[
T_n(f)(x)=\Lambda\left(S_{r_n}\left(Q_n(f)\right),C\right)(x),
\]
whenever $x\in C \in \mathcal{C}_n$. Observe that the definition of the
$\Lambda$ functions ensures that if $x \in C \cap C'$ and $C,C' \in \mathcal C_n$, then
\[
\Lambda(S_{r_n}(Q_n(f)),C)(x)=\Lambda(S_{r_n}(Q_n(f)),C')(x).
\]
Therefore $T_n$ is well-defined.

We will show that $T_n$ has the required properties. Fix $f\in B_{\Lipz{M}}$. To begin with,
\[
T_n(f)(x_0)=S_{r_n}\left(Q_n(f)\right)(x_0),
\]
because $x_0\in V_n$. Therefore $T_n(f)(x_0)=0$. Let $x,y\in M$. Recall that $\n{S_{r_n}}\leqslant 1$
and $\n{Q_n}\leqslant 1+(3n)^{-1}$. If $\n{x-y}\geqslant r_n$, then
\begin{align*}
&\;|T_n(f)(x)-T_n(f)(y)|\\
\leqslant&\; |S_{r_n}\left(Q_n(f)\right)(x)-S_{r_n}\left(Q_n(f)\right)(y)|
+2\sqrt NK(1+(3n)^{-1})\delta_n \tag*{by (\ref{eq:uniform approximation on cubes})}\\
\leqslant&\; |S_{r_n}\left(Q_n(f)\right)(x)-S_{r_n}\left(Q_n(f)\right)(y)|+(3n)^{-1}r_n \tag*{by (\ref{eq:delta})}\\
\leqslant&\; (1+(3n)^{-1})\n{x-y}+ (3n)^{-1}\n{x-y}\\
\leqslant&\; (1+n^{-1})\n{x-y}.
\end{align*}
On the other hand, if $\n{x-y}\leqslant r_n$, then $z \in [x,y]$ implies
$d_2(z,M)\leqslant d_2(z,\{x,y\})\leqslant \frac{1}{2}\pn{x-y}{2} \leqslant \frac{1}{2}Kr_n$, so
as $M\subseteq \hat{M}_n((\frac{1}{2}K+2)r_n)$, the line segment
$[x,y]$ lies entirely inside $\hat M_n(2r_n)\subseteq \bigcup \mathcal C_n \subseteq \hat M_n(r_n)$. Thus,
by partitioning $[x,y]$ with respect to the hypercubes through which it passes, the estimate of the
Lipschitz constants of the interpolation functions on hypercubes (\ref{eq:lipschitz constant on a cube}), which follows from (\ref{eq:unif-main}) by Lemma \ref{interpolate}, yields
\begin{align*}
|T_n(f)(x)-T_n(f)(y)|&\leqslant ((3n)^{-1}\Lip(Q_n(f))+\Lip(S_{r_n}(Q_n(f))))\n{x-y}\\
&\leqslant (1+(3n)^{-1})^2\n{x-y} \leqslant (1+n^{-1})\n{x-y}.
\end{align*}
Thus we conclude that $T_n$ is a well-defined mapping on $\Lipz{M}$. Moreover, it is
obviously a linear operator and $\n{T_n}\leqslant 1+n^{-1}$.

Given $x\in V_n$, denote by $\map{\phi_x}{\bigcup \mathcal{C}_n}{\R}$ the unique Lipschitz function that is coordinatewise affine
on each $C\in\mathcal C_n$ and satisfies $\phi_x\res{V_n}=\ind{\{x\}}\res{V_n}$. Since
$T_n(\Lipz{M})\subseteq \aspan\set{\phi_x\res{M}}{x\in V_n}$, the operator $T_n$ is of finite rank.

As stated in Lemmas \ref{Lip-perturb} and \ref{l:smoothing}, the operators $Q_n$ and $S_{r_n}$
are both dual operators. From the definition of the interpolation formula (\ref{def:lambda}),
it is easy to see that $T_n$ is also a dual operator.

Finally, by combining (\ref{eq:uniform approximation on cubes}), (\ref{pw convergence}) and
(\ref{eq:Q}), we get
\begin{align*}
|T_n(f)(x)-f(x)| \leqslant &\; |T_n(f)(x)-S_{r_n}(Q_n(f))(x)| + |S_{r_n}(Q_n(f))(x)-Q_n(f)(x)|\\
& + |Q_n(f)(x)-f(x)| \\
\leqslant&\; \sqrt NK(1+(3n)^{-1})\delta_n + 2K(1+(3n)^{-1})r_n + n^{-1},
\end{align*}
for $f\in B_{\Lipz{M}}$ and $x\in M$.
Then the choice of $\delta_n$ and $r_n$ (see (\ref{eq:delta}) and (\ref{eq:r}), respectively) gives
\begin{equation}\label{unif}
|T_n(f)(x)-f(x)| \leqslant \frac{1}{6n(n+1)} + (2K + 1)n^{-1}.
\end{equation}
Thus $T_n(f)(x)\to f(x)$ uniformly, both in $x \in M$ and $f \in B_{\Lipz{M}}$.

Now we explain why this means that $\free{M}$ has the MAP. Since $T_n$ is a dual operator, by
(\ref{unif}), the finite-rank predual operator $(T_n)_*$ on $\free{M}$ satisfies
\[
\n{(T_n)_*\delta_x - \delta_x} \to 0
\]
for all $x \in M$. Consequently, $(T_n)_*$ converges to the identity operator in the
strong operator topology. Since $\n{(T_n)_*} \to 1$, we deduce that $\free{M}$ has the MAP.
\end{proof}

\end{document}